\documentclass{amsart}

\usepackage{amsmath}
\usepackage{centernot}
\usepackage{mathtools}
\usepackage{amssymb}
\usepackage{graphicx}
\usepackage{amsfonts}
\usepackage{microtype} 

\usepackage{hyperref}
\usepackage{tikz-cd}
\usepackage{amsthm}
\usepackage[symbol]{footmisc}

\newtheorem{theorem}{Theorem}
\newtheorem{hyp}{Hypothesis}
\newtheorem{lemma}{Lemma}
\newtheorem{proposition}[theorem]{Proposition}

\newtheorem{definition}{Definition}

\theoremstyle{remark}
\newtheorem{remark}[lemma]{Remark}

\numberwithin{equation}{section}
\definecolor{oxfordblue}{rgb}{0.0, 0.13, 0.28}
\definecolor{phthalogreen}{rgb}{0.07, 0.21, 0.14}

\usepackage[utf8]{inputenc}
\usepackage[english]{babel}

\usepackage{hyperref}
\hypersetup{
colorlinks   = true, 
urlcolor     = oxfordblue, 
linkcolor    = oxfordblue, 
citecolor   = phthalogreen 
}


\newcommand{\mS}{\ensuremath{\mathcal{S}}}
\newcommand{\mF}{\ensuremath{\mathcal{F}}}

\newcommand{\mY}{\ensuremath{\mathcal{Y}}}
\newcommand{\mN}{\ensuremath{\mathcal{N}}}
\newcommand{\mL}{\ensuremath{\mathcal{L}}}
\newcommand{\mZ}{\ensuremath{\mathcal{Z}}}
\newcommand{\mR}{\ensuremath{\mathcal{R}}}

\newcommand{\Nm}{\ensuremath{\mathbb{N}}}
\newcommand{\Rm}{\ensuremath{\mathbb{R}}}

\newcommand{\mM}{\ensuremath{\mathcal{M}}}
\newcommand{\mP}{\ensuremath{\mathcal{P}}}
\newcommand{\mK}{\ensuremath{\mathcal{K}}}

\newcommand{\Tm}{\ensuremath{\mathbb{T}}}
\renewcommand{\sp}{\ensuremath{\mathfrak{sp}}}

\makeatletter
\newsavebox\myboxA
\newsavebox\myboxB
\newlength\mylenA
\newcommand*\xoverline[2][0.70]{%
	\sbox{\myboxA}{$\m@th#2$}%
	\setbox\myboxB\null
	\ht\myboxB=\ht\myboxA%
	\dp\myboxB=\dp\myboxA%
	\wd\myboxB=#1\wd\myboxA
	\sbox\myboxB{$\m@th\overline{\copy\myboxB}$}
	\setlength\mylenA{\the\wd\myboxA}
	\addtolength\mylenA{-\the\wd\myboxB}%
	\ifdim\wd\myboxB<\wd\myboxA%
	\rlap{\hskip 0.8\mylenA\usebox\myboxB}{\usebox\myboxA}%
	\else
	\hskip -0.5\mylenA\rlap{\usebox\myboxA}{\hskip 0.5\mylenA\usebox\myboxB}%
	\fi}
\makeatother

\begin{document}
\title[bumpy metric theorem a la Ma\~{n}e for non-convex Hamiltonians]{Bumpy metric theorem in the sense of Ma\~{n}e for non-convex Hamiltonians} 

\author[S. Aslani]{Shahriar Aslani}
\address{	University of Toronto, Department of Mathematics, 40 Saint George St., Toronto, ON, M5S 2E4.}
\email{shahriar.aslani@math.toronto.edu}

\author[P. Bernard]{Patrick Bernard}
\address{PSL Research University,
Université Paris Dauphine, CEREMADE, Place du Maréchal de Lattre de Tassigny,
75775 PARIS cedex 16, FRANCE }
\email{pbernard@ceremade.dauphine.fr} 
\begin{abstract}
We prove a bumpy metric theorem in the sense of Ma\~{n}e for  non-convex Hamiltonians that are satisfying a certain geometric property. 
\end{abstract}
\maketitle
\section{Introduction}

In the study of fiberwise convex Hamiltonian systems, Ricardo Ma\~né introduced the notion now called Ma\~né
genericity (\cite{M}): 
A property is called Mañé generic if, for each Hamiltonian $H$, the property is satisfied by the Hamiltonian $H+u$
for a generic potential $u$. 

Although this notion is particularly relevant in the case where $H$ is convex, it also makes  perfect sense for more general Hamiltonians. Our goal in the present paper is to investigate the Mañé-generic properties of periodic orbits of not necessarily convex Hamiltonian systems and to work in the direction of what could be called a bumpy metric theorem in this context: the property that all periodic orbits on a given energy surface are non-degenerate is Mañé generic. In the convex case, this problem was studied in dimension 2 by Oliveira in \cite{O08}, and the missing perturbation lemma necessary to generalize the result in any dimension was obtained in \cite{RR} (using an incorrect normal form fixed in \cite{AB}).
Observe however that there is a case not treated in \cite{O08,RR}, and that the bumpy metric theorem in the sense of Mañé 
was not solved before \cite{B24} even in the convex case, more details below. In the present paper, we generalize  the results proved in 
\cite{O08,RR} (which, as we just explained, are not exactly those stated) to non-convex Hamiltonians.

There is a long history on the study of generic properties of periodic orbits in various contexts.
It started with the works of Kupka \cite{K63} and Smale \cite{S63} stating that all periodic orbits are hyperbolic for generic systems in the class of all vector fields
(and moreover the intersections of stable and unstable manifolds are transverse). In the class of Hamiltonian systems, such a property can't be expected, but Robinson \cite{CR1, CR2}  proved among other things that generic Hamiltonian systems have no degenerate periodic orbits on a given energy surface. The difference in the present work is
 that we are considering the restricted class of perturbations by potentials.
The bumpy metric theorem, obtained in the same period, claims that generic geodesic flows have no degenerate periodic orbits, see \cite{A68,K76,A82} for contributions to this result, and \cite{BJP} for the semi-Riemannian case. Perturbing by potentials is similar (and to a large extent equivalent) to considering conformal perturbations of metrics, which is a much smaller family of perturbations than the class of all metrics. As to the study of Mañé perturbations of convex Hamiltonians, besides the works \cite{O08,RR, AB} already discussed,
let us  mention  Contreras \cite{Co10} where some important tools are introduced.

We will say that the Hamiltonian $H(q,p):T^*M  \to \mathbb{R}$ is convex if its fiberwise Hessian $\partial^2_{p^2}H(q,p)$ is positive-definite for all $(q,p) \in T^*\mathbb{R}^n$.
The following definition will play a central role in our study of Mañé generic properties of periodic orbits :

\begin{definition}\label{def: 1.6}
Let $H(q,p):T^*M \to \mathbb{R}$ be a Hamiltonian which is defined on the cotangent bundle of a smooth manifold $M.$ We say $H$ is fiberwise isoenergetically non-degenerate  at $(q,p) \in T^*M$ if 
\begin{equation} \label{eq: 1.2}
det 
\begin{bmatrix}
\partial^2_{pp}H(q,q) & \partial_p H(q,p) \\ 
\partial_p H(q,p)^T & 0 
\end{bmatrix} \ne 0. 
\end{equation}
\end{definition}

This is equivalent to saying that $p$ is a regular point of the function $H(q,.)$ on $T^*_qM$, and that the Hessian of this function, seen as a quadratic form, is non-degenerate on the kernel 
of its differential. This kernel is the intersection between the tangent space of the energy level and the fiber.

A convex Hamiltonian is fiberwise isoenergetically non-degenerate at each point except those where $\partial_pH=0$. There is
at most  one such point per fiber.

Another important example to have in mind is the case of fiberwise quadratic Hamiltonians. If such a Hamiltonian is non-degenerate in each fiber, then it is fiberwise isoenergetically non-degenerate precisely outside of its zero energy level. 

Given a periodic orbit $\theta$ of the Hamiltonian  $H$, we can as is usual take a transverse section and consider the Poincaré return map to that section. This 
Poincaré map preserves the energy level, and we call restricted Poincaré map its restriction  to the energy level. 
The differential at the orbit of this restricted map is called the restricted linearized return map. It is well defined and symplectic on the tangent space to the restricted section. By taking a symplectic base of this tangent space, we can consider the restricted linearized return map as an element of the symplectic group
$Sp(2d)$ (if $M$ has dimension $d+1$). Up to conjugacy, it does not depend on the section or the base. 
The orbit is called non-degenerate if $1$ is not an eigenvalue of the restricted linearized map.

Given a periodic orbit $\theta(t)=(Q(t),P(t))$, of minimal period $T$, we say that $s$ is a neat time  if  $\dot Q(s)\neq0$ and if  there exists no $t\neq s \mod T$
 such that $Q(s)=Q(t)$. 
With this definition,  the set of neat  times  is easily seen to be open  (see the proof of Lemma \ref{lem-open} below).
Note, in contrast to what is implicitly assumed in \cite{O08, RR}, that periodic orbits without any neat time may exist, even in the convex case. For example, if $H$ is a natural system of the form $H(q,p)=g_q(p,p)+u(q)$, where $g$ is a Riemannian metric and $u$ is a potential defined on the base, then  there often exist reversible periodic orbits, \textit{i.e.} periodic orbits which perform a round trip above an arc in the base. The existence of such orbits is studied for example in \cite{K76}, where they are called librations. They have no neat time.

Given an orbit $\theta$ of the Hamiltonian $H$, we say that the potential $u$ is admissible for $\theta$ if the value and the differential of $u$ are both zero at each point of the projection of $\theta$. We denote by  $C^{\infty}_{\theta}(M)\subset C^{\infty}(M)$ the space of admissible potentials.
If $u$ is admissible for the periodic orbit $\theta$, then $\theta$ is also a periodic orbit of $H+u$.
 Given a transverse section to $\theta$, the restricted transverse section of $\theta$ for $H+u$ is different from the restricted transverse section of $\theta$ for $H$, but they have the same tangent space at the orbit. So by choosing a symplectic base of this fixed tangent space, we can define the restricted linearized return map 
 $L(\theta,H+u), u\in C^{\infty}_{\theta}(M)$ as an element of $Sp(2d)$. The map $u\mapsto L(\theta,H+u)$ is well defined up to a fixed conjugacy.
Our first result is:

\begin{theorem}\label{thm-pert}
Consider a smooth Hamiltonian $H(q,p):T^*M\to \mathbb{R}$, and a periodic orbit $\theta(t)$ of the Hamiltonian vector field of $H$. Assume that $\theta$ admits  a neat time $t_0 \in \mathbb{R}$  such that $H$ is fiberwise isoenergetically non-degenerate at $\theta(t_0)$. Then   the map
$$
C^{\infty}_{\theta}(M)\ni u \longmapsto L(\theta, H+u)\in Sp(2d)
$$  
is weakly open, meaning that the image of each non-empty open set contains a non-empty open set.
\end{theorem}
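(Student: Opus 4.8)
The plan is to reduce, using admissible potentials, to a local perturbation problem near the neat time $t_0$, and to solve that problem by a Lie-algebraic/control-theoretic computation in which the fiberwise iso-energetic non-degeneracy of $H$ at $\theta(t_0)$ is decisive. Concretely, it suffices to prove that for \emph{every} Hamiltonian $H$ satisfying the hypotheses, the image under $u\mapsto L(\theta,H+u)$ of \emph{every} neighbourhood of $0$ in $C^\infty_\theta(M)$ contains a nonempty open subset of $Sp(2d)$. Indeed, given a nonempty open $U\subset C^\infty_\theta(M)$ and $u_0\in U$, the Hamiltonian $H+u_0$ again has $\theta$ as a periodic orbit with neat time $t_0$; and since $u_0$ and $du_0$ vanish along the projection of $\theta$, the fiberwise data $\partial_pH$ and $\partial^2_{p^2}H$ along $\theta$ are unchanged, so $H+u_0$ is still fiberwise iso-energetically non-degenerate at $\theta(t_0)$. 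Applying the reduced statement to $H+u_0$ and the neighbourhood $U-u_0$ of $0$ then yields the theorem, so henceforth I argue near $u=0$.

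\emph{Variational formula and localisation at $t_0$.} Put $n=\dim M$, $2d=2n-2$, and let $t\mapsto\Phi_t\in Sp(2n)$ be the linearised Hamiltonian flow of $H$ along $\theta$. The monodromy $\Phi_T$ preserves the flag $\mathbb{R}\dot\theta(t_0)\subset\ker dH(\theta(t_0))$; by fiberwise iso-energetic non-degeneracy (which forces $\dot\theta(t_0)\ne 0$) the symplectic reduction of $\ker dH(\theta(t_0))$ by its characteristic line is a $2d$-dimensional symplectic space, and the reduction map $\Phi\mapsto\overline\Phi$, defined on symplectic matrices preserving this flag, is a submersion onto $Sp(2d)$ carrying $\Phi_T$ to $L(\theta,H)$; for admissible $v$ the same flag and reduced space serve for $H+v$, so $L(\theta,H+v)=\overline{\Phi^{\,H+v}_T}$. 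One has the standard formula
\[
\tfrac{d}{d\epsilon}\big|_{\epsilon=0}\,\Phi^{\,H+\epsilon v}_T=\Phi_T\int_0^T\Phi_s^{-1}\,J\,\mathrm{Hess}(v)(\theta(s))\,\Phi_s\,ds .
\]
Since $v$ is a base function vanishing together with its differential along $\gamma:=\pi(\theta)$, in a cotangent chart $\mathrm{Hess}(v)(\theta(s))$ is block-diagonal with only the $qq$-block $S(s):=\partial^2_{qq}v(Q(s))$ nonzero, and differentiating $v(Q(s))\equiv 0$ twice gives the single constraint $S(s)\big(\dot Q(s),\dot Q(s)\big)=0$, which is otherwise the only restriction. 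As the set of neat times is open, fix $\delta>0$ with $(t_0-\delta,t_0+\delta)$ consisting of neat times, so $\gamma$ is embedded there and disjoint from the rest of $\gamma$; an admissible $v$ supported in a thin tube around this arc realises an arbitrary smooth symmetric field $S(\cdot)$ on the arc (subject only to that constraint) and $S\equiv 0$ off it, and then, choosing the base point of the monodromy inside the arc, $\Phi^{\,H+v}_T=R\cdot G(v)$ with $R\in Sp(2n)$ the ($v$-independent) linearised flow over the complement of the arc and $G(v)\in Sp(2n)$ the linearised flow of $H+v$ across it.

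\emph{Key step.} The map $S(\cdot)\mapsto G(v)$ is the input--output map of the linear control system $\dot\Phi=\big(A(s)+B(S(s))\big)\Phi$ across the arc, where $A(s)=J\,\mathrm{Hess}(H)(\theta(s))$ and $B(S)=\left(\begin{smallmatrix}0&0\\-S&0\end{smallmatrix}\right)$, the control $S(s)$ symmetric with $S(s)(\dot Q(s),\dot Q(s))=0$. One computes brackets along the arc: the upper-left block of $\mathrm{ad}_A B(S)$ is $-\,\partial^2_{p^2}H\cdot S$ and the upper-right block of $\mathrm{ad}_A^2 B(S)$ is $2\,\partial^2_{p^2}H\,S\,\partial^2_{p^2}H$ up to lower-order terms; combining these with further brackets and with the products $[B(S_1),[A,B(S_2)]]$, and using the freedom in $S$, the aim is to show that the Lie algebra generated along the arc by $A$ and $\{B(S)\}$ surjects, under the reduction map, onto $\mathfrak{sp}(2d)$. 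Fiberwise iso-energetic non-degeneracy enters here essentially: it is equivalent to $\ker\partial^2_{p^2}H(\theta(t_0))$ being a line transverse to $\ker\partial_pH(\theta(t_0))$ (and likewise near $t_0$, by openness), and this transversality is exactly what keeps the bracket computation from collapsing. In the convex case $\partial^2_{p^2}H$ is invertible and one recovers the perturbation lemma of \cite{RR,AB}; in general its rank drops by at most one in a controlled direction, and a finer bracket analysis, also using the genuinely $s$-dependent coefficients $\partial^2_{qp}H$, $\partial^2_{q^2}H$ along the arc, is required. I expect this to be the main obstacle, and it is presumably the point where one settles for \emph{weak} openness — replacing a single surjective differential by an iterated, non-infinitesimal perturbation argument — if the span falls short in degenerate configurations.

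\emph{Conclusion.} Once enough directions are produced, pick finitely many fields $S_1(\cdot),\dots,S_N(\cdot)$ whose associated admissible potentials $v_1,\dots,v_N$ make $(\lambda_1,\dots,\lambda_N)\mapsto L(\theta,H+\sum_i\lambda_i v_i)$ a submersion at the origin; the inverse function theorem on $\mathbb{R}^N$ then shows that the image of a small cube is a neighbourhood of $L(\theta,H)$, and in particular lies in the image of any prescribed neighbourhood of $0$ in $C^\infty_\theta(M)$. (If no single differential is surjective, one instead chains such perturbations to fill out an open set.) This proves the reduced statement, hence the theorem.
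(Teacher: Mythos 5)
Your overall strategy is the right one and matches the paper's: localize near the neat time $t_0$, reduce to a linear control system for the (restricted) linearized transition maps, with symmetric ``$qq$''--blocks of Hessians of admissible potentials as controls, and try to show that iterated Lie brackets span $\mathfrak{sp}(2d)$. But the proposal stops precisely at the step that carries all the work, and what it does say in that step contains errors.

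\textbf{The core gap.} You acknowledge that the bracket computation is ``presumably the main obstacle'' and leave it at a sketch. Done directly on the raw Hessian of $H$ along the arc, that computation is not tractable: the brackets $\mathrm{ad}_{A(s)}^k B(S)$ mix $\partial^2_{qp}H$, $\partial^2_{q^2}H$, $\partial^2_{p^2}H$ and their $s$-derivatives, and there is no reason the resulting formulas close up. The paper's route around this is Theorem~\ref{thm-nf}: an \emph{admissible} (fibered, fibre-preserving) symplectic normal form in which, along the orbit, $\partial^2_{q\hat p}H=0$, $\partial^2_{p_1\hat p}H=0$ and $\partial^2_{\hat p\hat p}H=D$ is a constant diagonal $\pm 1$ matrix. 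Only after this normalization does $Y(t)$ reduce to the simple form $\begin{bmatrix}0&D\\-K(t)&0\end{bmatrix}$ and the four brackets $W_0,\dots,W_3$ become explicitly computable (Proposition~\ref{prop-lcp}). Without a step of this kind your ``key step'' has no content: you would need to supply and prove such a normalization (or some substitute) before the control-theoretic argument can run.

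\textbf{Where ``weak'' openness actually comes from.} You speculate that one ``settles for weak openness'' because the span might ``fall short in degenerate configurations.'' This guesses at the right phenomenon but not the mechanism. The surjectivity of the differential holds only when $K(0)+B_0(0)$ avoids a closed nowhere-dense set $\mK_D\subset\mS(d)$ (for $D=I$ this is the set of matrices with a repeated eigenvalue, cf.\ \cite{Co10}). Given a nonempty open set $U$ of admissible potentials, the paper first \emph{chooses} $u_0\in U$ so that $K(0)+B_0(0)\notin\mK_D$, and then applies the submersion at $u_0$. That choice, not an iterated non-infinitesimal argument, is why only the image of $U$ (not the image of arbitrary neighbourhoods of a fixed $u$) is claimed to contain an open set. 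Your reduction to ``neighbourhoods of $0$'' is fine, but you still need the genericity step to place the base point in the good stratum; your sketch omits it.

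\textbf{Two smaller inaccuracies.} (i) Your reformulation of fiberwise iso-energetic non-degeneracy --- ``$\ker\partial^2_{p^2}H$ is a line transverse to $\ker\partial_pH$'' --- is not equivalent to Definition~\ref{def: 1.6}. For instance $\partial^2_{p^2}H$ can be invertible (kernel $\{0\}$) and the restriction of the fibre Hessian to $\ker\partial_pH$ still be degenerate (the bordered determinant vanishes). The correct invariant statement is that the fibre Hessian is non-degenerate on $\ker\partial_pH$, i.e., in coordinates with $\partial_pH=e_1$, that $\partial^2_{\hat p\hat p}H$ is invertible. (ii) Admissibility of $v$ means $v$ \emph{and} $dv$ vanish on $\pi(\theta)$. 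Differentiating $dv(Q(s))\equiv 0$ gives $\mathrm{Hess}(v)(Q(s))\,\dot Q(s)=0$, which is $n$ linear conditions, not the single condition $S(\dot Q,\dot Q)=0$ you obtain from differentiating $v(Q(s))\equiv 0$ alone. The true control space is symmetric forms on the quotient transverse to $\dot Q$; this is what the paper uses (only $\partial^2_{\hat q\hat q}u$ enters $W_u$). Your overcounting of the available controls does not help, and obscures the reduction.

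In short: the strategy is identified correctly and agrees with the paper, but the proposal does not prove Theorem~\ref{thm-pert}. The two indispensable ingredients --- the normal form making the bracket computation finite and explicit, and the identification of the nowhere-dense bad set $\mK_D$ together with the prior perturbation of $u_0$ out of it --- are both absent.
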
 

This theorem is proved below in Section \ref{sec-pert} using a local normal form stated and proved in Section \ref{sec-nf}.
It is likely that the above map is actually open,  an  adaptation of \cite{LRR} should provide a proof.
In the convex case, this perturbation statement was obtained in \cite{RR}, 
using a normal form from \cite{FR2} which  was corrected in \cite{AB}.
Our proof  follows a similar strategy.
 In the convex case, it is automatic that $H$ is fiberwise isoenergetically non-degenerate at $\theta(t_0)$ when $t_0$ is a neat time, so this assumption can be omitted.
However, the assumption that $\theta$ admits a neat time is 
necessary, and not automatic. It is wrongly omitted in \cite{O08,RR}. 
To see that the existence of a neat time is a necessary assumption, consider again a natural system   $H=g_q(p,p)+v(q)$ and an orbit $\theta$ which is a libration (a reversible orbit).
Then $H+u$ is still reversible for each potential $u$, meaning that $(H+u)(q,-p)=(H+u)(q,p)$ and as a consequence 
the section and the coordinates can be chosen such that $L(\theta,H+u)$ is a reversible symplectic matrix for each $u\in C^{\infty}_{\theta}(M)$, meaning that $R=LRL$,
where 
$R=\begin{bmatrix}
I&0\\0&-I
\end{bmatrix}$.
Since the space of symplectic reversible matrices is a submanifold of positive codimension in $Sp(2d)$,
 the image of the map $u\mapsto L(\theta, H+u)$ has  no interior, which contradicts the conclusion of the
theorem in that case.
\footnote{See \cite{B24}, a paper written after this one was submitted, for more details.}

It is also clear that some nondegeneracy assumption on $H$ is necessary.
Indeed, we can consider the Hamiltonian $H=p_1$ on the manifold $M=\Tm\times \Rm^d$ with coordinates $(q_1,\hat q)$.
All the Hamiltonians $H+u$ generate the equations $q_1'=1, \hat q'=0$ so, in the sections
$q_1$ constant, the return map of any periodic orbit is fixing the $\hat q$ coordinate, hence the linearized return maps  have a first block line equal to $[I,0]$. Such matrices have no interior in 
$Sp(2d)$.

By methods similar to those used in \cite{A63, A68,A82,O08}, Theorem \ref{thm-pert} implies :

\begin{theorem}\label{thm-g1}
	Given a smooth Hamiltonian $H: T^*M\rightarrow \Rm$, and a conjugacy invariant subset $\Upsilon \subset Sp(2d)$
	which is an $F_{\sigma}$ with empty interior,
	there exists an $F_{\sigma}$ with empty interior  $\mF\subset C^{\infty}(M)$   such that, for each $u\in C^{\infty}(M)-\mF$, the Hamiltonian system $H+u$ has the following property :
	
	The zero energy level of $H+u$ is regular. Moreover,  
	if  $\theta$ is a zero energy periodic orbit of $H+u$ 
	which admits a neat time $t_0$ such that $H$ is fiberwise isoenergetically non-degenerate at $\theta(t_0)$, then $\theta$ is non-degenerate and  satisfies 
	$$
	L(\theta, H+u)\not \in \Upsilon.
	$$
\end{theorem}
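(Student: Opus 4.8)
The plan is to deduce Theorem \ref{thm-g1} from Theorem \ref{thm-pert} by a standard Baire-category argument, carried out relative to a fixed auxiliary Hamiltonian $H$ and the fixed linear-algebraic data that encodes ``degeneracy''. First I would enlarge the target set: let $\Upsilon' = \Upsilon \cup \{L \in Sp(2d) : 1 \text{ is an eigenvalue of } L\}$. The eigenvalue-$1$ locus is a proper closed algebraic (hence nowhere dense) subset of $Sp(2d)$, invariant under conjugacy, so $\Upsilon'$ is again a meager conjugacy-invariant subset; it suffices to produce $\mG$ so that $L(\theta, H+u) \notin \Upsilon'$ for all admissible zero-energy orbits $\theta$ with a good neat time. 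Since $\Upsilon'$ is meager, write $\Upsilon' \subset \bigcup_{m} F_m$ with each $F_m$ closed, conjugacy-invariant, and nowhere dense in $Sp(2d)$; it is enough to arrange, for each $m$, that $L(\theta, H+u) \notin F_m$ for all such $\theta$.

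Next I would set up the parametrization of zero-energy periodic orbits. The first sentence of the conclusion — that the zero energy level of $H+u$ is regular — is itself an open-dense condition on $u$ (the set of $u$ for which $0$ is a regular value of $H+u$ is open, and density follows from Sard applied to $H$, shifting by constants), so intersect $\mG$ with that open dense set from the start. For $u$ in that set, zero-energy periodic orbits come in compact families, and a neat time together with iso-energetic non-degeneracy of $H$ at that point is an open condition; one covers the relevant part of the energy surface by finitely many ``boxes'', and within each box the periodic orbits of period $\leq T$ (for any fixed $T$) passing through a neat point where $H$ is fiberwise iso-energetically non-degenerate form a compact set, continuously (indeed smoothly, by the implicit function theorem applied to the Poincaré return map on a transverse section) parametrized. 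This is exactly the step where the method of \cite{A63,A68,A82,O08} is invoked: one shows that on each such box the map ``orbit $\mapsto$ restricted linearized return map'' is, after the admissible perturbation, transverse to $F_m$, and since $F_m$ has positive codimension and the orbit family is finite-dimensional, transversality forces the image to avoid $F_m$ entirely.

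The core local assertion is then: fix a box $B$, a period bound $T$, and $m$; then the set
$$
\mathcal{O}_{B,T,m} = \{u \in C^\infty(M) : L(\theta,H+u)\notin F_m \text{ for every zero-energy }\theta \text{ of period} \leq T \text{ meeting } B \text{ at a good neat time}\}
$$
is open and dense in $C^\infty(M)$. Openness comes from compactness of the orbit family and closedness of $F_m$ together with continuous dependence of $L(\theta,H+u)$ on $(\theta,u)$. Density is where Theorem \ref{thm-pert} enters: given $u_0$ and an orbit $\theta$ with $L(\theta, H+u_0)\in F_m$, pick one of its good neat times $t_0$; near the projected point $\theta(t_0)$ one may modify $u_0$ by an admissible (for $\theta$) potential supported in a small neighborhood disjoint from the rest of the box's orbits, and Theorem \ref{thm-pert} says $u \mapsto L(\theta, H+u)$ is weakly open, so its image meets the open dense complement of $F_m$; a standard localization/finiteness argument upgrades this from a single $\theta$ to the whole compact family. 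Finally, $\mG := (\text{regular-energy open dense set}) \cap \bigcap_{B,T,m} \mathcal{O}_{B,T,m}$, a countable intersection over a countable list of boxes, integer period bounds, and indices $m$, is the desired dense $G_\delta$ by Baire.

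The main obstacle is the density step, and specifically the bookkeeping needed to pass from ``one orbit at a time'' (which is all Theorem \ref{thm-pert} directly gives, since its admissible perturbations must vanish to first order along the \emph{entire} projected orbit $\theta$) to ``all orbits in the compact family simultaneously''. The delicate point is that an admissible potential for one orbit need not be admissible for a nearby orbit, and conversely a perturbation designed to move $L(\theta,H+u)$ off $F_m$ could disturb the return map of a neighbouring orbit; one handles this by an inductive argument over a finite cover where, at each stage, the perturbation is supported near a neat point of the orbit under consideration and chosen small enough (using openness of the conditions already achieved) not to destroy them — together with the observation that ``$1$ is not an eigenvalue'' has been built into $F_m$, so the orbits under control are isolated and the family is genuinely finite-dimensional, making the transversality/dimension count legitimate. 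A secondary technical point, flagged by the authors' own remarks, is that one must genuinely restrict attention to orbits possessing a neat time at a fiberwise iso-energetically non-degenerate point: librations and the degenerate examples in the introduction show no stronger statement is available, so the statement of the theorem is already calibrated to what the argument can deliver.
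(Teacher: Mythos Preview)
There is a genuine gap in the density step. You try to derive both conclusions --- non-degeneracy of the orbit and $L(\theta,H+u)\notin\Upsilon$ --- from Theorem~\ref{thm-pert} alone, by folding the eigenvalue-$1$ locus into $\Upsilon'$ and then running a ``one orbit at a time'' perturbation scheme. But Theorem~\ref{thm-pert} only moves the linearized return map of a \emph{fixed} orbit $\theta$ under perturbations that are admissible for that $\theta$; it says nothing about what happens to the other periodic orbits nearby, and your iteration needs those to be under control. Your justification that ``the orbits under control are isolated'' because eigenvalue $1$ sits inside $F_m$ is circular: the orbits you must perturb are exactly those with $L\in F_m$, which includes the degenerate ones, and degenerate orbits can occur in positive-dimensional families. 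Until non-degeneracy is established there is no finite list to iterate over, and each non-admissible perturbation can create, destroy, or displace nearby periodic orbits in ways Theorem~\ref{thm-pert} does not govern. (A secondary imprecision: a closed nowhere-dense $F_m\subset Sp(2d)$ need not be a submanifold of positive codimension, so the phrase ``transverse to $F_m$'' and the accompanying dimension count are not well-posed as stated.)

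The paper resolves this by separating the two tasks. Non-degeneracy is obtained first, independently of Theorem~\ref{thm-pert}, via a parametric-transversality/Sard argument (Proposition~\ref{prop-pt}): one shows that the restricted return map $\psi(\hat x,u)$ is a submersion in $u$ at every point of $\mP^4$ (Lemma~\ref{lem-c2}, which uses only that $D=\partial^2_{\hat p\hat p}H$ is invertible), so that $\{\psi(\hat x,u)=\hat x\}$ is a manifold on which the critical values of the projection to $u$ --- precisely the $u$ admitting a degenerate orbit --- are meager by Sard. Only after this, on the open set $\mP^5$ of non-degenerate periodic points where $\Pi$ is a local homeomorphism, does Theorem~\ref{thm-pert} enter: weak openness of $u\mapsto L$ then pulls back the meager set $\Upsilon$ to a meager set of potentials (Proposition~\ref{prop-fnd}). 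Your outline is missing the first of these two ingredients; supplying it (the analogue of Lemma~\ref{lem-c2}) is what makes the argument go through.
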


Recall that a set is called an $F_{\sigma}$ if it is a countable union of closed sets.
Theorem \ref{thm-g1} is proved in section \ref{sec-pt}. The main lines in the proof are similar to those used in \cite{A82,O08},
but our presentation is different and avoids the recurrence on the periods.

The conclusion that $\theta$ is non-degenerate is actually contained in the conclusion 
$
L(\theta, H+u)\not \in \Upsilon,
$
provided $\Upsilon$ contains matrices having an eigenvalue equal to one, which will be assumed without loss of generality.

As mentioned earlier, the convex case  was  obtained in dimension 2 ($d=1$) by Oliveira in \cite{O08}, and then in any dimension 
by Rifford and Ruggiero in \cite{RR} (complemented by \cite{AB}). Our statement is weaker than those in these papers, where the conclusions are claimed for all periodic orbits.
However, the existence of a neat time $t_0$ is  implicitly used in these papers, and it is wrongly omitted in the statements. In the convex case, once $t_0$ is a neat time, it is automatic that $H$ is fiberwise isoenergetically non-degenerate at $\theta(t_0)$.

In the non-convex  situation studied here, we have the second undesirable constraint  that $H$ be fiberwise isoenergetically non-degenerate at $\theta(t_0)$. 
This can obviously be ensured by assuming that $H$ is fiberwise isoenergetically non-degenerate at each point $x$ of $T^*M$
where $\partial_pH(x)\neq 0$.
This however is a  strong assumption which is satisfied  in the convex case, but not when $H$ is a  non-convex fiberwise quadratic Hamiltonian.
An important observation here is that the set $\Sigma_H$ of points of $T^*M$ at which $H$ fails to be fiberwise isoenergetically non-degenerate is a fixed data of the problem, which is unchanged by adding a potential. 
We now give a hypothesis on this set $\Sigma_H$ which is sufficient to generically ensure the condition that $H$ is fiberwise isoenergetically non-degenerate at $\theta(t_0)$. 
This condition is satisfied by the set $\Sigma_H$ associated to  non-degenerate fiberwise quadratic Hamiltonians.

\begin{hyp}\label{h-1}
	The subset $\Sigma\subset T^*M$ is contained in a countable union of manifolds of positive codimension which are transversal to the vertical.
\end{hyp}

\begin{theorem}\label{thm-g2}
Let $H: T^*M\rightarrow \Rm$ be a smooth Hamiltonian and let $\Sigma$ be a subset of $T^*M$ satisfying Hypothesis \ref{h-1}. 
There exists an $F_{\sigma}$ with empty interior  $\mF\subset C^{\infty}(M)$  such that, for each $u\in C^{\infty}(M)-\mF$, the Hamiltonian system $H+u$ has the following property :

For each orbit  $\theta$ of $H+u$, and each time $t_0$ such that $\partial_pH(\theta(t_0))\neq 0$,  there exist  times $t$, arbitrarily close to $t_0$, such that 
$\theta(t)\not \in \Sigma$.
\end{theorem}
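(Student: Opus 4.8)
Write $n=\dim M$, so that $\dim T^*M=2n$, and let $\pi\colon T^*M\to M$ be the projection. The plan is to invoke the Baire category theorem twice --- once in the time variable and once in $C^\infty(M)$ --- so as to reduce the theorem to a \emph{non-lingering} statement about a single submanifold, which is then obtained by a transversality argument. Using Hypothesis~\ref{h-1} and cutting each of the countably many manifolds appearing there into countably many compact pieces each contained in one chart, we obtain countably many sets $N_i\subset T^*M$ covering $\Sigma$, where each $N_i$ is compact, of positive codimension $k_i\ge1$, transversal to the vertical, and contained in the domain $U_i$ of a chart on which $N_i\subset\{F_i=0\}$ for a submersion $F_i\colon U_i\to\Rm^{k_i}$; transversality to the vertical means precisely that $\partial_pF_i$ has rank $k_i$ on $U_i$. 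Suppose we can produce, for each $i$, a dense $G_\delta$ set $\mG_i\subset C^\infty(M)$ such that for $u\in\mG_i$ no orbit of $H+u$ stays inside $N_i$ throughout a non-degenerate time interval on which $\partial_pH\neq0$. Put $\mG=\bigcap_i\mG_i$, again a dense $G_\delta$. If $u\in\mG$ and the conclusion of the theorem failed for some orbit $\theta$ of $H+u$ at some $t_0$ with $\partial_pH(\theta(t_0))\neq0$, then necessarily $\theta(t_0)\in\Sigma$, and there is a compact interval $I\ni t_0$ with $\theta(I)\subset\Sigma$ and $\partial_pH\circ\theta\neq0$ on $I$; writing $I=\bigcup_i\{t\in I:\theta(t)\in N_i\}$ as a countable union of closed subsets of the complete space $I$, Baire's theorem produces an index $i$ for which $\{t\in I:\theta(t)\in N_i\}$ contains a non-degenerate subinterval $[a,b]$, so that $\theta([a,b])\subset N_i$ while $\partial_pH\neq0$ along it, contradicting $u\in\mG_i$.

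It remains to treat a fixed $N=N_i$, with $F=F_i$, $k=k_i$, $U=U_i$. If an orbit $\theta=(Q,P)$ of $H+u$ stays in $N$ on $[a,b]$, then $t\mapsto F(\theta(t))$ vanishes identically there, hence so do all of its $t$-derivatives at the midpoint $x_*:=\theta(\tfrac{a+b}{2})$; since $\tfrac{d^j}{dt^j}F(\theta(t))=(L^j_{X_{H+u}}F)(\theta(t))$, where $L$ denotes the Lie derivative, this says that, for every $m$,
\[
x_*\in Z_m(u):=\bigl\{x\in U:\ F(x)=0,\ (L_{X_{H+u}}F)(x)=0,\ \dots,\ (L^m_{X_{H+u}}F)(x)=0\bigr\},
\]
and moreover $\partial_pH(x_*)\neq0$. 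So it is enough to find a dense $G_\delta$ set of $u$ for which the map
\[
\Psi_u\colon\ U\cap\{\partial_pH\neq0\}\to\Rm^{(m+1)k},\qquad \Psi_u(x)=\bigl(F(x),(L_{X_{H+u}}F)(x),\dots,(L^m_{X_{H+u}}F)(x)\bigr),
\]
is transversal to $0$: for such $u$ the set $Z_m(u)\cap\{\partial_pH\neq0\}=\Psi_u^{-1}(0)$ is a submanifold of codimension $(m+1)k\ge m+1$, which is empty once $m\ge 2n$, so that no such point $x_*$ can exist. Choosing $m=2n$ then yields $\mG_i$.

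The set of $u$ making $\Psi_u$ transversal to $0$ is a dense $G_\delta$ by a standard application of the parametric transversality theorem, provided the total map $(u,x)\mapsto\Psi_u(x)$ is transversal to $0$. Its first ($F$-)component does not involve $u$ but has surjective differential in $x$ (because $F$ is a submersion), so it suffices to show that at each $(u,x)$ with $\partial_pH(x)\neq0$ the partial map $v\mapsto D_u\Psi_u(x)[v]$ is onto the last $mk$ coordinates. Here the point is that $(L^j_{X_{H+u}}F)(x)$ depends on $u$ only through the $j$-jet of $u$ at $q:=\pi(x)$ and --- since the Hamiltonian vector field of a potential sees only its gradient --- only through derivatives of order $\ge1$; an induction on $j$ shows that the linearisation of $(L^j_{X_{H+u}}F)(x)$ in the direction $v$ equals
\[
\pm\,\partial_pF(x)\cdot\bigl(\nabla^{j}v(q)[\,\partial_pH(x),\dots,\partial_pH(x)\,]\bigr)\;+\;\bigl(\text{a quantity depending only on }\nabla v(q),\dots,\nabla^{j-1}v(q)\bigr),
\]
with exactly $j-1$ copies of $\partial_pH(x)$ contracted into the symmetric $j$-tensor $\nabla^{j}v(q)$. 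Thus, filtering the jet of $v$ at $q$ by order, the map $v\mapsto D_u\Psi_u(x)[v]$ is block lower-triangular, and its $j$-th diagonal block, $\nabla^jv(q)\mapsto\pm\,\partial_pF(x)\cdot(\nabla^jv(q)[\partial_pH(x),\dots,\partial_pH(x)])$, is onto $\Rm^k$: because $\partial_pH(x)\neq0$, the contraction $\nabla^jv(q)[\partial_pH(x),\dots,\partial_pH(x)]$ can be prescribed to be any vector of $\Rm^n$, and $\partial_pF(x)$ has rank $k$ by transversality of $N$ to the vertical. A block lower-triangular map with surjective diagonal blocks is surjective, which is exactly what is required. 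The heart of the matter --- and the one step that is not soft --- is this jet computation, namely verifying by induction that the top-order term in $v$ at level $j$ carries a single factor $\partial_pF(x)$ and precisely $j-1$ factors $\partial_pH(x)$; the remaining point is the routine care needed to run the parametric transversality theorem in the $C^\infty$ category (for instance through $C^r$ with $r$ large compared to $2n$, over precompact exhaustions, and then intersecting over $r$).
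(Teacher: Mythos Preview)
Your argument is correct and follows a genuinely different route from the paper's. The paper proves the theorem by a \emph{multi-time evaluation}: it fixes times $0<\sigma_1<\cdots<\sigma_k<\epsilon$ with $k>\dim T^*M$, defines
\[
\Phi(x,u)=\bigl(\varphi(\sigma_1,x,u),\ldots,\varphi(\sigma_k,x,u)\bigr)\in (T^*M)^k,
\]
and shows (Lemma~\ref{lem-vert}) that a finite-dimensional family of potentials, built from approximate Dirac pulses supported in the successive subintervals $]\sigma_{i-1},\sigma_i[$, makes $\Phi$ transverse to $\Sigma^k$; since each pulse moves the orbit in a purely vertical direction at time $\sigma_i$, transversality of $\Sigma$ to the vertical is exactly what is needed, and the dimension count $k>\dim T^*M$ then forces the preimage to project to a nowhere dense set. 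You replace this multi-time picture by its infinitesimal counterpart at a \emph{single} point: the tower of iterated Lie derivatives $(L^j_{X_{H+u}}F)(x)$, controlled by the jet of the potential at $\pi(x)$. Your inductive identification of the top-order term $\pm\,\partial_pF(x)\cdot\nabla^jv(q)[\partial_pH(x),\ldots,\partial_pH(x)]$ is precisely the jet-level translation of the paper's Dirac-pulse computation (each pulse produces one vertical kick, propagated by $\partial_x\varphi$; in your language, each $L_{X_{H+u}}$ differentiates $v$ once and contracts with $\partial_pH$). Both proofs therefore rest on the same two inputs---$\partial_pH\neq0$ and $\partial_pF$ of full rank---and both conclude by a codimension count. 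Your version is arguably more intrinsic (no adapted coordinates, no variational equation), at the price of the inductive jet identity; the paper's version keeps the perturbations explicit and geometric, which is convenient if one later wants quantitative control. The initial Baire reduction on the time interval, which the paper does not need because its $\mR(\epsilon)$ already localizes in time, is a nice touch that lets you work with a single compact piece $N_i$.
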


This theorem is proved in Section \ref{sec-o}.
It can be applied in particular to the case where $\Sigma$ is the subset $\Sigma_H$ of points at which $H$ is not fiberwise isoenergetically non-degenerate, and we obtain:

\begin{theorem}\label{g3}
	Let   $H: T^*M\rightarrow \Rm$, be a Hamiltonian such that $\Sigma_H$ satisfies Hypothesis \ref{h-1} and let $\Upsilon \subset Sp(2d)$ be a conjugacy invariant $F_{\sigma}$ with empty interior.  
There exists an $F_{\sigma}$ with empty interior   $\mF\subset C^{\infty}(M)$  such that, for each $u\in C^{\infty}(M)-\mF$, the Hamiltonian system $H+u$ has the following property :

	The zero energy level of $H+u$ is regular. Moreover,  
	if  $\theta$ is a zero energy periodic orbit of $H+u$ 
	which admits a neat time $t_0$, then $\theta$ is non-degenerate and  satisfies 
	$$
	L(\theta, H+u)\not \in \Upsilon.
	$$
\end{theorem}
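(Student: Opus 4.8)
The plan is to intersect the generic sets produced by Theorems~\ref{thm-g1} and~\ref{thm-g2}, the latter applied with $\Sigma=\Sigma_H$. First I would record the elementary observation that $\Sigma_H$, the set of points of $T^*M$ at which $H$ fails to be fiberwise isoenergetically non-degenerate, contains every point where $\partial_pH=0$: at such a point the matrix in \eqref{eq: 1.2} has a vanishing last row and column, so its determinant is zero. Since $\Sigma_H$ satisfies Hypothesis~\ref{h-1}, Theorem~\ref{thm-g2} furnishes a dense $G_{\delta}$ set $\mathcal G_2\subset C^{\infty}(M)$ for which every orbit $\theta$ of $H+u$ (with $u\in\mathcal G_2$) and every time $t_0$ with $\partial_pH(\theta(t_0))\neq0$ admit times $t$ arbitrarily close to $t_0$ with $\theta(t)\notin\Sigma_H$; Theorem~\ref{thm-g1} furnishes a dense $G_{\delta}$ set $\mathcal G_1$ with the conclusion stated there. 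I would then set $\mathcal G=\mathcal G_1\cap\mathcal G_2$, which is again a dense $G_{\delta}$ because $C^{\infty}(M)$ is a Baire space.

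Next I would check the conclusion for a fixed $u\in\mathcal G$. Regularity of the zero energy level comes for free from $u\in\mathcal G_1$. Given a zero energy periodic orbit $\theta(t)=(Q(t),P(t))$ of $H+u$ with a neat time $t_0$, I would first note that, since $u$ depends on $q$ alone, $\dot Q=\partial_p(H+u)(\theta)=\partial_pH(\theta)$ along $\theta$, so $\partial_pH(\theta(t_0))=\dot Q(t_0)\neq0$ because $t_0$ is neat. Feeding this into the property defining $\mathcal G_2$ yields times arbitrarily close to $t_0$ at which $\theta$ avoids $\Sigma_H$. Since the set of neat times of $\theta$ is open (see the proof of Lemma~\ref{lem-open}), it contains a neighbourhood of $t_0$, and I can pick a neat time $t_1$ in that neighbourhood with $\theta(t_1)\notin\Sigma_H$ --- that is, a neat time $t_1$ at which $H$ is fiberwise isoenergetically non-degenerate.

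Finally, $\theta$ is then a zero energy periodic orbit admitting a neat time $t_1$ with $H$ fiberwise isoenergetically non-degenerate at $\theta(t_1)$, which is precisely the situation in which the conclusion of Theorem~\ref{thm-g1} applies (since $u\in\mathcal G_1$); hence $\theta$ is non-degenerate and $L(\theta,H+u)\notin\Upsilon$, as required. I do not expect any genuine obstacle here: all the analytic content has been pushed into Theorems~\ref{thm-g1} and~\ref{thm-g2}, and the only delicate point is the bookkeeping step of replacing the given neat time $t_0$ (which may well lie in $\Sigma_H$) by a nearby neat time $t_1$ outside $\Sigma_H$ --- this is exactly where openness of the set of neat times is used to knit the two genericity statements together.
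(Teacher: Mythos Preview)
Your proof is correct and follows exactly the route the paper intends: the paper does not spell out a separate proof of Theorem~\ref{g3}, but presents it as the immediate consequence of applying Theorem~\ref{thm-g2} with $\Sigma=\Sigma_H$ and combining with Theorem~\ref{thm-g1}. You have written out precisely this combination, including the key bookkeeping step of using openness of the set of neat times to pass from the given neat time $t_0$ to a nearby neat time $t_1$ with $\theta(t_1)\notin\Sigma_H$; the only superfluous remark is the observation that $\Sigma_H$ contains the points where $\partial_pH=0$, which you state but never use.
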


\textit{Notations.} 
We denote by $d+1$ the dimension of $M$, so that the symplectic sections have dimension $2d$.
We denote by $\mM(m), \mS(m),\mS^-(m)$ respectively the spaces of square matrices of size $m$, of symmetric matrices of size $m$,
and of antisymmetric matrices of size $m$. Finally, we denote by $Sp(m)$  ($m$ even) the group of symplectic matrices and  by
by $\sp(m)$ the space of Hamiltonian matrices, which are the matrices $L$ such that $\mathbb{J}L$ is symmetric, where  
$\mathbb{J}$ is the standard symplectic matrix $\begin{bmatrix}
0 & I \\
-I & 0
\end{bmatrix}$. 
We will usually denote by $q=(q_1, \ldots, q_{d+1})=(q_1, \hat q)$ the local coordinates on $M$ and by 
$x=(q,p)= (x_1, \hat x)$, with $x_1=(q_1,p_1)$ and $\hat x=(\hat q, \hat p)$, the local coordinates on $T^*M$.
We denote by $(e_i)$ the canonical bases of $\Rm^m$ and $\Rm^{m*}$.

We will denote by $\varphi(t,x,u)$ or $\varphi^t_u(x)$ the Hamiltonian flow of $H+u$.
Note that the space $C^{\infty}(M)$ of potentials $u$ is a separable Fréchet space, but not a Banach space.
Although names may suggest the opposite, there is no simple notion of Fréchet differential on a Fréchet space.
The notation $\partial_u$ of partial derivative with respect to the variable $u\in C^{\infty}(M)$ will always be understood in the meaning of Gateau differentiability. We will restrict to finite dimensional subspaces of  $C^{\infty}(M)$ whenever we really
manipulate differential calculus (and then refer to the stronger notion of Fréchet differential).

After the present paper was submitted, some of the problems presented as open in this introduction were solved in the convex case in  \cite{B24}. Although finally published later, the present paper was submitted before \cite{B24} was written. 

\subsection{Acknowledgement} 
For the purpose of Open Access, a CC-BY-SA public copyright licence has been applied by the authors to the present document and will be applied to all subsequent versions up to the Author Accepted Manuscript arising from this submission.

\section{Normal form near orbit segments of non-convex Hamiltonian systems}\label{sec-nf}

In this section, we give a normal form near fiberwise isoenergetically non-degenerate points which will be used to derive 
Theorem \ref{thm-pert}. This normal form and its proof are similar to the one obtained in \cite{AB} in the convex setting,
and the  novelty here is  to single out fiberwise isoenergetic non-degeneracy as the appropriate hypothesis.
Since potentials, or in other words Hamiltonians constant on the fibers, play a special role in the problem, 
we need to restrict to symplectic charts which preserve the vertical fibration, 
and more precisely charts $\Psi: U\times (\Rm^{d+1})^*\rightarrow  T^*M$ which send each fiber $q\times  (\Rm^{d+1})^*$
to a fiber of $T^*M$ (not necessarily in a linear way).

\begin{theorem}\label{thm-nf}
	Let $H:T^*M\to \mathbb{R}$ be a smooth Hamiltonian, and let $x_0\in T^*M$ be a point such that $H$ is fiberwise isoenergetically non-degenerate at $x_0$. Then, there exists a fibered symplectic chart near $x_0$ 
	and $\delta >0$ such that, in these coordinates,  for $|t|\leq \delta$, we have:
	\begin{itemize}
		\item [(1)] $\varphi^t_H(0)=(te_1,0)$
		\item [(2)] $\partial^2_{q \hat p}H(t e_1,0)=0.$
		\item [(3)]  $\partial^2_{p_1\hat{p}}H(t e_1,0)=0.$ 
		\item [(4)] $\partial^2_{\hat{p}\hat p}H(t e_1,0)=D,$  
	\end{itemize}
where $D$ is a diagonal matrix whose diagonal elements are  $\pm 1$, and where we denote by $\varphi^t_H$ the Hamiltonian flow of $H$.
\end{theorem}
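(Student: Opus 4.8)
The plan is to build the desired chart in successive stages, each one normalizing one item without destroying the previous ones, always working within the class of fibered symplectic charts (those preserving the vertical fibration).

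First I would straighten the orbit. Since $x_0$ is fiberwise isoenergetically non-degenerate, in particular it is a regular point of $H(q,\cdot)$ on the fiber, so $\partial_p H(x_0)\neq 0$ and the Hamiltonian vector field is transverse to the vertical. A preliminary fibered symplectic change of coordinates (a lift of a coordinate change on $M$ together with a fiber-preserving translation) puts $x_0$ at the origin and the orbit segment along $\{(te_1,0)\}$, giving item (1). This uses the standard fact that a non-vanishing vector field can be straightened, combined with the observation that a diffeomorphism of the base $M$ lifts canonically to a fibered symplectomorphism of $T^*M$; the transversality of $\dot\theta$ to the vertical is what allows the straightening to be done by a base coordinate change.

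Next I would kill the mixed base--fiber second derivatives along the orbit, i.e.\ items (2) and (3). The tool here is a generating-function construction: composing with the time-$1$ flow of a fibered symplectic vector field, or equivalently using a generating function of the form $S(q,P)=\langle q,P\rangle + (\text{correction})$, one can prescribe, along the orbit, the symmetric part of $\partial^2_{qp}H$ modulo the constraints imposed by the equations of motion being already in the form (1). Concretely, differentiating Hamilton's equations along $\theta$ shows which combinations of the Hessian blocks are free to be adjusted and which are constrained; one solves the resulting linear ODE (a Riccati-type or simply linear equation in the coordinates of the chart) along the compact segment $|t|\le\delta$. This is essentially the argument of \cite{AB} in the convex case, and the only thing to check is that fiberwise isoenergetic non-degeneracy — rather than full fiberwise convexity — suffices to make the relevant linear system solvable. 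That verification, replacing the role of $\partial^2_{p^2}H>0$ by the invertibility of the bordered matrix in \eqref{eq: 1.2}, is the step I expect to be the main obstacle: one must track exactly where convexity was used in \cite{AB} and see that only the nondegeneracy on the kernel of $\partial_pH$ is really needed.

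Finally I would normalize the fiberwise Hessian in the $\hat p$ directions, item (4). Along the orbit, after (2) and (3), the fiberwise Hessian $\partial^2_{pp}H(te_1,0)$ has a block structure whose $\hat p\hat p$ block is a symmetric matrix $Q(t)$; by the isoenergetic non-degeneracy hypothesis and the normalizations already achieved, $Q(t)$ is nondegenerate (its signature is constant along the connected segment). A $t$-dependent linear symplectic change of coordinates in the fiber directions $\hat p$ — which can be chosen fibered and symplectic by pairing it with the dual change in $\hat q$ — diagonalizes $Q(t)$ to a constant $D=\mathrm{diag}(\pm1)$, using Sylvester's law of inertia together with a smooth (in $t$) choice of the diagonalizing frame, possible because $Q(t)$ is a smooth path of nondegenerate symmetric matrices on an interval. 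One checks this last change preserves (1), (2), (3) since it is linear in the fibers and acts trivially at $\hat p=0$ on the base coordinates. Assembling the three stages gives the chart and the $\delta>0$ claimed.
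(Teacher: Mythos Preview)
Your overall architecture matches the paper's: straighten the orbit, then normalize the mixed Hessian blocks, then reduce $\partial^2_{\hat p\hat p}H$ to a constant signature matrix $D$. The paper in fact separates your Stage~2 into two explicit steps: (2) is obtained by a flow-box argument on the vector field $q\mapsto\partial_pH(q,0)$ (no ODE, no non-degeneracy needed), and (3) by an explicit shear $\phi(q_1,\hat q)=(q_1+l(q_1)\cdot\hat q,\hat q)$ with $l(q_1)=(\partial^2_{\hat p\hat p}H)^{-1}\partial^2_{p_1\hat p}H$, which is exactly where isoenergetic non-degeneracy enters as invertibility of $\partial^2_{\hat p\hat p}H$ in the straightened coordinates. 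Your description of Stage~2 is vaguer but not wrong.

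The genuine gap is in Stage~3. You assert that the $t$-dependent linear change diagonalizing $Q(t)$ ``preserves (1), (2), (3) since it is linear in the fibers and acts trivially at $\hat p=0$ on the base coordinates.'' This is false for (2). A fibered symplectic map acting on $\hat p$ by a $q_1$-dependent matrix must be the homogeneous lift of a base map $\phi(q_1,\hat q)=(q_1,M(q_1)\hat q)$, and because $M$ depends on $q_1$ the lift necessarily modifies $p_1$ by $-\hat p\,M^{-1}(q_1)M'(q_1)\hat q$. Composing with $H$ introduces a term $-\hat p\,M^{-1}M'\,\hat q$, so $\partial^2_{\hat q\hat p}H(te_1,0)=-M^{-1}(t)M'(t)$, which is nonzero whenever $M$ is nonconstant. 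Item (2) is destroyed.

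The paper's repair is a real extra idea you are missing: one does not take an arbitrary smooth Sylvester frame $M(t)$, but solves the ODE $M'=\tfrac12\underline A'(M^t)^{-1}D$ with an initial condition satisfying $M(0)DM^t(0)=\underline A(0)$. This forces both $MDM^t=\underline A$ and $DM^{-1}M'$ to be \emph{symmetric}. The symmetry is precisely what allows the unwanted $\hat q\hat p$ cross term to be cancelled by a subsequent vertical transformation $(q,p)\mapsto(q,p+du_q)$ with $u(q)=\tfrac12\hat q^t B(q_1)\hat q$, $B=DM^{-1}M'$, restoring (2) while keeping (4). Without this two-step maneuver your Stage~3 does not close.
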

\begin{remark}
	The equalities (1) to (4) are equivalent to the expansion
$$
H(q,p)= H(q,0)+ v(q)p_1 + \frac{1}{2}a(q_1)p_1^2+ \frac{1}{2}\hat p D \hat p^t +O_3(\hat q, p)
$$
with $H(q,0)= H(0,0)+O_2(\hat q)$ and $v(q) =1+ O(\hat q)$, for $|q_1|\leq \delta$.
Indeed, we have the second order expansion in $p$
\begin{align*}
H(q,p) &= H(q,0)+ p\cdot V(q)  +\frac {1}{2} p\partial^2 _{pp}H(q,0) p^t +O_3(p)\\
       &= H(q,0)+ p\cdot V(q)  +\frac {1}{2}p \partial^2 _{pp}H(q_1e_1,0) p^t+O_3(\hat q,p)
\end{align*}
with
$
V(q) := \partial_pH(q,0)
$.
If (1) holds, we have $V(q_1,0)=e_1$, so
$$v(q):= \partial_{p_1} H(q,0)= 1+O(\hat q)
$$
and assuming moreover (2) yields
$
\partial_{\hat p} H(q,0)=O_2(\hat q)
$
hence
$$
p\cdot V(q) = v(q)p_1+O_3(\hat q, p).
$$
The quadratic terms in $p$ then take the desired form with $a(q_1)= \partial^2_{p_1p_1}H(q_1e_1,0)$  assuming (3) and (4).

\end{remark}

\textit{Proof of Theorem \ref{thm-nf}.}  
By using a chart on $M$ at $q_0=\pi(x_0)$, we can assume that $M=\Rm^{d+1}$.
Since $H$ is fiberwise isoenergetically non-degenerate at $x_0$, we have $\partial_pH(x_0) \neq 0$, which implies that the projected orbit 
$t\mapsto \pi \circ \varphi^t_H(x_0)$ is an embedding near $t=0$.
This allows to take the chart in such a way that, for small $t$, 
\begin{equation}\label{eqe1}
\pi \circ \varphi^t_H(x_0)=te_1.
\end{equation}
We denote by $\underline H(q,p)$ the Hamiltonian in these local  coordinates.
Since $\partial_p\underline H(0)=e_1$ the hypothesis that $H$ is fiberwise isoenergetically non-degenerate at $x_0$ is equivalent in coordinates to the 
fact that 
$
\partial^2_{\hat p\hat p}\underline H(0)
$
is invertible.

\begin{definition}
(a) We call a symplectic map $\Psi(q,p):T^*\mathbb{R}^{d+1} \to T^*\mathbb{R}^{d+1}$ \textit{fibered} if it preserves the vertical fibration \textit{i.e.} if it has the form $\Psi(q,p)=\big(\phi(q),G(q,p)\big)$. \\ 
(b) We say that the fibered symplectic map $\Psi$ is \textit{homogeneous} if it preserves the zero section, then its is of the form $\Psi(q,p)=\big(\phi(q), p\circ (d\phi_q)^{-1}\big)$ for some local diffeomorphism $\phi$ of the base. \\
(c) We say that $\Psi$ is \textit{vertical} if it preserves each fiber, then it is locally of the form $\Psi(q,p)=\big(q, p + d g(q)\big)$ for some function $g(q):\mathbb{R}^{d+1} \to \mathbb{R}$. \\
(d) We say that $\Psi$ is \textit{admissible} if it is a fibered symplectic local diffeomorphism, and if its horizontal
component is the identity on a small interval $te_1, |t|\leq \delta$.
\end{definition}

Each fibered symplectic diffeomorphism is the composition of a homogeneous and of a vertical one.

Let us  detail the elementary proofs on the above claims.
An easy and classical computation shows that a lower  block triangular matrix $M$ is symplectic (meaning that $M^T \mathbb{J} M=\mathbb{J}$)  if and only if it is of the form
$$M=\begin{bmatrix} A &0 \\ BA& (A^{-1})^T\end{bmatrix}$$
with $B$ symmetric. So the second coordinate $G(q,p)$ of a fibered symplectic diffeomorphism has the form $G(q,p)=\omega(q)+p\circ (d\phi_q)^{-1} $
(recalling that $A^Tp=p\circ A$). If the diffeomorphism preserves the zero section, then $\omega\equiv 0$ and 
$\Psi$ has the form described in (b) above.

Note that  $\omega(q)=G(q,0)$, and $\omega$ is a map from $\Rm^{d+1}$ to $(\Rm^{d+1})^*$, in other words it is a one-form on $\Rm^{d+1}$. It is well-known 
that there exists a function $g$ on $\Rm^{d+1}$ such that $\omega=dg$ if and only if $\omega$ is a closed one-form, meaning in coordinates that 
$\partial_j\omega_i=\partial_i\omega_j$ for each indices $i$ and $j$.

We also deduce from the expression $G(q,p)=\omega(q)+p\circ (d\phi_q)^{-1} $ that a vertical symplectic diffeomorphism has the form 
$\Psi(q,p)=(q, p+\omega(q)) $. Its differential in coordinates is 
$\begin{bmatrix} I&0\\ \partial_q\omega &I\end{bmatrix}$, and it is a symplectic matrix. From the above remark on block triangular symplectic matrices, we deduce that $\partial_q\omega$ is symmetric at each point, which is equivalent to $\omega$ being closed. This proves the claim in (c).

Finally, if $\Psi(q,p)= (\phi(q), \omega(q)+ p\circ (d\phi_q)^{-1})$ is a general fibered symplectic diffeomorphism, and 
$\Psi_0(q,p):=(\phi(q),  p\circ (d\phi_q)^{-1})$ is the corresponding homogeneous diffeomorphism (which is still symplectic), we  observe that
$\Psi\circ \Psi_0^{-1}$ and $\Psi_0^{-1}\circ \Psi$ are vertical symplectic diffeomorphisms, so $\Psi$ is the composition of a homogeneous symplectic diffeomorphism and of a vertical one (in any order).  \qed

We shall now prove the conclusions of the Theorem by applying a succession of fibered symplectic diffeomorphisms fixing the points $(q_1e_1,0)$, hence preserving (1). Such diffeomorphisms will be called admissible.
In each step, we denote by $\underline H$ the original Hamiltonian, and by $H=\underline H\circ \Psi$ the Hamiltonian in the new coordinates. We allow ourselves to reduce $\delta$ at each step. 

\begin{proof}[Proof of (1)]
Let $\underline P_1(t)$ be the first component of $\underline P(t)$ (the vertical component of the orbit before the change of coordinates).
We consider a function $v(t) : \Rm\rightarrow  \Rm$ such that  $v'=\underline P_1$
and the function 
$u(q_1, \hat q):=v(q_1)+\underline{\hat P}(q_1)\cdot \hat q.
$
We have $du _{te_1}=\underline P(t)$, hence applying the vertical diffeomorphism $\Psi (q,p)=(q, p+du_q)$,
the new orbit $(Q(t), P(t))=\Psi^{-1}(\underline Q(t), \underline P(t))$ satisfies $P(t)=0$ for small $t$. 
\end{proof}

\begin{proof}[Proof of (2)]
	We assume that (1) holds.
	We consider the vector field
	$$
	\underline V(q):= \partial _p \underline H(q, 0)
	$$
	on $\Rm^{d+1}$.
	We apply the flow box Theorem to find  a local diffeomorphism $\phi$ of $\Rm^{d+1}$ near $0$ 
	such that the backward image of $\underline V$ by $\phi$ is the constant vector field 
	$V(q)\equiv e_1$. 
	Since the initial orbit of $0$ for the flow of $\underline V$ is already $t\mapsto te_1$, we can moreover assume that 
	$\phi$ is fixing the points $te_1$, $|t|\leq \delta$.
	
	Let $\Psi(q,p)=(\phi(q), p\circ (d\phi_q)^{-1})$ be the corresponding homogeneous diffeomorphism and 
	$H=\underline H \circ \Psi$. Since $\Psi$ is symplectic, it sends the Hamiltonian vectorfield $X_H$ of $H$ to 
	the Hamiltonian vectorfield $X_{\underline H}$ of $\underline H$, meaning that 
	$$
	X_{\underline H}(\Psi(q,p))=d\Psi_{(q,p)}\cdot X_H(q,p),
	$$
	and in particular
	$$
	X_{\underline H}(\phi(q),0)=d\Psi_{(q,0)}\cdot X_H(q,0).
	$$
	Looking at the horizontal components, we get
	$$
	\underline V(\phi(q))= d\phi_q\cdot \partial_pH(q,0)
	$$
	which means that $V(q):=\partial_pH(q,0)$ is the backward image of $\underline V$ under $\phi$.
	The way we chose $\phi$ implies that 
	$$\partial_pH(q,0)=V(q)\equiv e_1$$ 
	locally.
	This is quite stronger than (2), but we will apply other changes of coordinates which will not preserve this additional 
	structure.
	\end{proof}

	The two steps just performed do not use the hypothesis of fiberwise isoenergetic non-degeneracy, but this hypothesis is essential for the next steps. In coordinates such that (1) holds, this hypothesis is equivalent to the invertibility of the matrix
	$
	\partial^2_{\hat p\hat p}H(0).
	$
	
\begin{proof}[Proof of (3).]	
We assume that (1) and (2) are  satisfied for $\underline H$, and prove that 
(3) can be obtained by a further admissible change of coordinates.
We consider a base diffeomorphism of the form
$\phi (q_1,\hat q)= (q_1+l(q_1)\cdot \hat q, \hat q)$,
where $q_1\mapsto l(q_1)$ is a smooth map with values in $\Rm^{d*}$ defined near $0$ in $\Rm$.
The corresponding homogeneous diffeomorphism satisfies
$$
\Psi: (q_1,0,p_1,\hat p)\mapsto(q_1, 0, p_1, \hat p-p_1 l(q_1)).
$$
We then have 
$\partial _{\hat p}(\underline H\circ \Psi)_{(q_1e_1, p_1, \hat p)}=
\partial _{\hat p}\underline H _{(q_1e_1, p_1, \hat p-p_1l(q_1))}
$
hence
$$
\partial^2_{p_1 \hat p}H_{(q_1e_1,0)}=\partial^2_{p_1 \hat p}(\underline H\circ \Psi)_{(q_1e_1,0)}=\partial^2 _{p_1\hat p}\underline H _{(q_1e_1,0)}-
\partial^2_{\hat p\hat p}\underline H_{(q_1e_1,0)}\cdot l^t(q_1).
$$ 
We obtain (3) by choosing 
$$
l(q_1):= \big( (\partial^2_{\hat p\hat p}\underline H_{(q_1e_1,0)})^{-1} \cdot
\partial^2 _{p_1\hat p}\underline H _{(q_1e_1,0)}\big)^t.
$$
Observe that  $\partial^2_{\hat p\hat p}\underline H_{(q_1e_1,0)}$ is invertible for small $q_1$ as a consequence of the hypothesis
of fiberwise isoenergetic non-degeneracy. 
Note in the above computation that we consider $l$ as a line matrix.

Finally, let us check that we have preserved (2). As in  the proof of (2), the diffeomorphism $\phi$ sends the vectorfield 
$V(q)=\partial_p H(q,0)$ to the vectorfield $\underline V(q)=\partial_p\underline H(q,0)$. 
Since 
 $\phi$ preserves the $\hat q$ 
coordinate, the corresponding coordinates of $V$ and $\underline V$ satisfy
$$
\hat V = \underline {\hat V} \circ \phi.
$$
Differentiating this equality at the point $q_1e_1$ gives 
$$
\partial_q \hat V(q_1e_1)
=\partial _q \underline {\hat V} (q_1e_1)\cdot \partial_q \phi(q_1e_1).
$$
If $\underline H$ satisfies (2), then $\partial_q\underline {\hat V} (q_1e_1)=0$, so
$$
\partial_{q\hat p} H(q_1e_1, 0)=\partial  _q {\hat V} (q_1e_1)=0.
$$
\end{proof}

\begin{proof}[Proof of (4).]
	
	We assume that the equations (1) to (3) initially hold.
	We will obtain (4) by an admissible (usually not homogeneous) transformation preserving all these equalities. This transformation will be decomposed  into first a homogeneous tranformation and second a vertical
	transformation none of which  preserve (2).

	The first step consists of applying the homogeneous change of coordinates $\Psi$ associated to a diffeomorphism
	of the form
	$$
	\phi (q_1,\hat q)=(q_1, M(q_1)\cdot \hat q),
	$$
	where $M(t)$ is a $d\times d$  invertible matrix depending smoothly on $t\in \Rm$ near $t=0$.
	The matrix of the differential of $\phi$ is 
	$$
	J (q)=
	\begin{bmatrix}
	1 & 0\\ M'(q_1)\hat q & M(q_1)
	\end{bmatrix},
	\quad 
	J^{-1}(q)=
	\begin{bmatrix}
	1 & 0\\- M^{-1}(q_1)M'(q_1) \hat q & M^{-1}(q_1)
	\end{bmatrix},
	$$
	where $M'(q_1)$ is the derivative.
	We thus have
	$$
	\Psi(q,p)=\big(q_1, M(q_1) \hat q, p_1-\hat p M^{-1}(q_1)M'(q_1)  \hat q 
	,\hat p M^{-1}(q_1)\big).
	$$
	The Hamiltonian in original coordinates is of the form
	$$
	\underline H (q,p)=
	\underline H(q,0)+ \underline v(q) p_1+ \frac{1}{2} \underline a(q_1) p_1^2 + \frac{1}{2}   \hat p\underline A(q_1) \hat p^t +O_3(\hat q, p)
	$$
	for small $|q_1|$,
	with $\underline v(q)=\partial_{p_1}\underline H(q,0)=1+O(\hat q)$, $\underline a(q_1)=\partial^2_{p_1p_1} \underline H(q_1e_1, 0)$ and
	$\underline A(q_1)=\partial^2_{\hat p\hat p} \underline H(q_1e_1,0)$.
	We compute
	\begin{align*}
	H(q,p)&=\underline H\circ \Psi (q,p) =
 H(q,0)+ v(q)\big(p_1-\hat p M^{-1}(q_1)M'(q_1) \hat q\big) \\ &+\frac{1}{2}  a(q_1) (p_1+O_2 (\hat q, \hat p))^2 
	 +\frac{1}{2}   \hat pM^{-1}(q_1)\underline A(q_1) (\hat pM^{-1}(q_1))^t + O_3(\hat q, p)
	\end{align*}
	where $v(q)=\underline v\circ \phi(q)=1+O(\hat q)$ and $a(q_1)=\underline a(q_1)$. We obtain
\begin{align*}
	H(q,p) &=
	H(q,0)+ v(q)p_1-\hat p M^{-1}(q_1)M'(q_1) \hat q+\frac{1}{2}  a(q_1) p_1^2 \\
	& +\frac{1}{2}   \hat pM^{-1}(q_1)\underline A(q_1)(M^{-1}(q_1))^t\hat p^t+ O_3(\hat q, p).
\end{align*}
Since $\underline A(q_1)$ is assumed invertible for $q_1=0$, there exists a diagonal matrix $D$ with diagonal terms 
	equal to $\pm 1$, and a matrix $M(0)$ such that $M(0)DM^t (0)=\underline A(0)$, which implies that 
	$\partial^2_{\hat p\hat p}H(0,0)=D$.
	Moreover, there exists a smooth curve $M(t)$ of matrices, defined near $t=0$ such that 
	$M(t)DM^t(t)=\underline A(t)$ for all small $t$ (we shall actually construct such a curve $M(t)$ below)
	and this implies that $A(t):= \partial^2_{\hat p\hat p}H(te_1,0)$ is constant and equal to $D$ for small $t$.

	However, the unavoidable  apparition of the term $\hat pM^{-1}(q_0)M'(q_0) \hat q$ means that (2) has not necessarily been preserved. In order to be able to restore it by a vertical change of coordinates, we need a particular choice for the curve  $M(t)$ :
	
	\begin{lemma}
		We can choose $M(t)$ in such a way that 
		$$
		H(q,p)=
		H(q,0)+v(q)p_1-\hat pDB(q_1) \hat q+\frac{1}{2}  a(q_1) p_1^2 
		+\frac{1}{2}  \hat pD\hat p^t + O_3(\hat q, p),
		$$ 
		where $B(q_1)$ is symmetric for all small $q_1$.
	\end{lemma}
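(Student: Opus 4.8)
The plan is to package the two requirements on the curve $q_1\mapsto M(q_1)$ into a single ordinary differential equation. Reading off the displayed expression for $H=\underline H\circ\Psi$, the quadratic term in $\hat p$ is
$\tfrac12\langle\hat p M^{-1}(q_1)\underline A(q_1),\hat p M^{-1}(q_1)\rangle=\tfrac12\,\hat p\,M^{-1}(q_1)\underline A(q_1)(M^{T}(q_1))^{-1}\hat p^{\,T}$,
so it equals $\tfrac12\hat p D\hat p^{\,T}$ exactly when
\[
M(q_1)\,D\,M^{T}(q_1)=\underline A(q_1),
\]
while the cross term is $-\hat p\,M^{-1}(q_1)M'(q_1)\,\hat q$, which takes the desired shape $-\hat p D B(q_1)\hat q$ with $B$ symmetric precisely when $M^{-1}(q_1)M'(q_1)=D B(q_1)$, i.e. when $D M^{-1}(q_1)M'(q_1)$ is symmetric. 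So it is enough to produce a smooth curve of invertible $d\times d$ matrices near $q_1=0$ satisfying both $M D M^{T}=\underline A$ and the symmetry of $D M^{-1}M'$.

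For the initial condition: since $\underline A(0)=\partial^2_{\hat p\hat p}\underline H(0)$ is symmetric and, by fiberwise isoenergetic non-degeneracy, invertible, Sylvester's law of inertia yields an invertible matrix $M(0)$ and a diagonal matrix $D$ with $\pm1$ on the diagonal (its entries encoding the signature of $\underline A(0)$) such that $M(0)D M^{T}(0)=\underline A(0)$. To propagate this, differentiate $M(t)D M^{T}(t)=\underline A(t)$ and write $M'=MN$; one gets $M(ND+DN^{T})M^{T}=\underline A'$, i.e. $ND+DN^{T}=M^{-1}\underline A'(M^{T})^{-1}$, which only constrains the $D$-symmetric part of $N$. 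Imposing in addition that $DN$ be symmetric, say $N=DS$ with $S=S^{T}$, makes the left-hand side equal to $2DSD$, whence $S=\tfrac12 D M^{-1}\underline A'(M^{T})^{-1}D$, $N=\tfrac12 M^{-1}\underline A'(M^{T})^{-1}D$, and finally the closed equation
\[
M'(t)=\tfrac12\,\underline A'(t)\,(M^{T}(t))^{-1}D .
\]

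I would then define $M(t)$ to be the solution of this ODE with the initial value found above: its right-hand side is smooth in $M$ wherever $M$ is invertible, so a unique smooth solution exists on a neighbourhood of $0$, on which $M(t)$ remains invertible by continuity. It then remains to check the two identities for this $M$. For the algebraic one, put $P(t)=M(t)D M^{T}(t)$; using the equation one finds $M'D M^{T}=M D (M')^{T}=\tfrac12\underline A'$, so $P'=\underline A'$ and $P(0)=\underline A(0)$, hence $P\equiv\underline A$ near $0$. For the symmetry, $D M^{-1}M'=\tfrac12 D M^{-1}\underline A'(M^{T})^{-1}D$ is visibly equal to its transpose because $D$ and $\underline A'$ are symmetric. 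Taking $B(q_1):=D M^{-1}(q_1)M'(q_1)$, symmetric for small $q_1$, turns the computed form of $H=\underline H\circ\Psi$ into the one asserted in the statement.

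The only genuinely delicate point is the compatibility of the two conditions: requiring $M(t)D M^{T}(t)=\underline A(t)$ for all $t$ leaves exactly the gauge freedom $M(t)\mapsto M(t)O(t)$ with $O(t)D O^{T}(t)=D$, and one must see that this freedom is precisely what is needed to also normalise the cross term — which is what the computation above shows, by converting the pointwise identity together with the symmetry requirement into a well-posed ODE whose solution then automatically preserves the pointwise identity. Beyond that, the proof is careful bookkeeping of transposes, keeping in mind that $\hat p$ is a row vector; I do not expect any further obstacle.
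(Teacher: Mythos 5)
Your proposal is correct and follows essentially the same route as the paper: you derive the same ODE $M'=\tfrac12\,\underline A'(M^T)^{-1}D$ by combining the differentiated constraint $MDM^T=\underline A$ with the symmetry of $DM^{-1}M'$, solve it from an initial $M(0)$ supplied by Sylvester's law of inertia, and then check both conditions propagate; the only cosmetic difference is that you package the computation via the logarithmic derivative $M'=MN$ with $N=DS$, whereas the paper manipulates the two conditions directly.
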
 
	
	\proof
	We need the matrix $M(t)$ to satisfy the two conditions that $M(t) DM^t(t)=\underline A(t)$ and $B(t):=D M^{-1}(t)M'(t)$ is 
	symmetric (recall that $D^2=Id$).
	Derivating the first condition, we get $M'DM^t +MD(M')^t =\underline A'$.
	Assuming the symmetry of $B$, we have that $(M^t)'(M^t)^{-1}D=DM^{-1}M'$, which implies 
	$M D(M^t)'=M'DM^t$. We obtain the equation $2M'DM^t=\underline A'$ or in other words :
	$$
	M'(t)=\underline A'(t)(M^t)^{-1}D/2.
	$$
	Reducing $\delta$ if necessary, there exists a solution $M(t)$ of this differential equation on the interval $[-\delta, \delta]$, with an initial condition $M(0)$ satisfying 
	$M(0)DM^t(0)=\underline A(0)$ (such an $M(0)$ exists provided the number of $-1$ on the diagonal on $D$ is equal to the signature of $A(0)$).
	For such a solution, we see that the corresponding $B$ is 
	$$
	DM^{-1}M'=DM^{-1}\underline A' (M^t)^{-1}D/2
	$$
	and so it is symmetric.
	Then, the  computation made earlier shows that 
	$(MDM^t)'=\underline A'$. Since the equality $MDM^t=\underline A$ is satisfied at $t=0$, it is thus satisfied for all $t\in [-\delta, \delta]$.
	\qed
	
	The second step consists of applying the vertical change of coordinates 
	$$
	\Theta :(q,p)\mapsto (q,p+du_q)
	$$
	with $u(q)=  \hat q^tB(q_1)\hat q/2$, so that $du_q=(\alpha(q),\hat q^tB(q_1))$, with $\alpha(q)= O_2(\hat q)$. We  obtain :  
	\begin{align*}
	H\circ \Theta (q,p)=&
	H(q,0)+v(q)(p_1+\alpha(q))-\big(\hat p+\hat q^tB(q_1)\big)DB(q_1) \hat q+\frac{1}{2}  a(q_1) (p_1+\alpha(q))^2 \\
	&+\frac{1}{2} \big(\hat p+\hat q^tB(q_1)\big)D\big( \hat p^t+B(q_1)\hat q\big) + O_3(\hat q, p)\\
	=&H(q,0)+v(q)\alpha(q)-\frac{1}{2} \hat q^tB(q_1)DB(q_1)\hat q
	+\frac{1}{2}a(q_1) \alpha^2(q)\\
	&+	v(q)p_1+a(q_1)p_1\alpha(q)+\frac{1}{2}  a(q_1) p_1^2 \\
	&-\hat pDB(q_1) \hat q+\frac{1}{2} \hat pDB(q_1)\hat q  + \frac{1}{2}\hat q^tB(q_1)D\hat p^t 
	+\frac{1}{2}\hat pD\hat p^t+ O_3(\hat q, p)\\
	=&f(q)
	+w(q)p_1+\frac{1}{2}  a(q_1) p_1^2 
	+\frac{1}{2} \hat pD\hat p^t+ O_3(\hat q, p),
	\end{align*}
	for some $f$ and $w$ satisfying  $f(q)=H(0,0)+O_2(\hat q)$ and  $w(q)=1+O(\hat q)$.
	Note in the above computation that $\hat q^tB(q_1)D\hat p^t =\hat pDB(q_1)\hat q$ because this is a $1\times 1$, hence  symmetric, matrix.
	\end{proof}

\section{Perturbing the linearized maps.} \label{sec-pert}

We prove  Theorem \ref{thm-pert}.
We consider a periodic orbit $\theta$ of $H$, of minimal period $T$.
We assume that there exists a neat time $t_0$ such that $H$ is fiberwise isoenergetically non-degenerate at $\theta(t_0)$, and there is no loss of generality in assuming that $t_0=0$.
We work locally near $x_0=\theta(0)$, in the coordinates given by Theorem \ref{thm-nf}. In these coordinates, $x_0=\theta(0)=(0,0)$.

The Hamiltonian flow of $H$  defines  Poincaré transition maps along the orbit $(te_1,0)$
between the sections $\{q_1=0\}$ and $\{q_1=t\}$ for $|t|\leq \delta$.
We shall be mostly interested in the restriction of this transition map to the energy level $\{H=0\}$, called the restricted transition map.
In the local coordinates, we have $dH(te_1,0)=(0,e_1)$, hence the tangent space to the energy level along the orbit is $\{p_1=0\}$, and the tangent space of the restricted section 
$\{H=0\}\cap \{q_1=t\}$ is  the space $\{q_1=0, p_1=0\}$, which we identify symplectically  with $\mathbb{R}^{2d}$
with coordinates $\hat x=(\hat q, \hat p)$.
The differential at $0$ of the restricted transition map between the sections $\{q_1=0\}$ and $\{q_1=t\}$ is then a symplectic linear map
$
L(t)\in Sp(2d).
$

\begin{lemma}\label{lem-el}
Assume that $H:T^*\Rm^{d+1} \to \mathbb{R}$ satisfies the conclusions  of Theorem \ref{thm-nf}, then the  restricted linearized transition maps $L(t)$ solve the differential equation
 $$\dot{L}(t)=Y(t)L(t),
 $$
 where 
\begin{align*}
 Y(t)&:=\mathbb{J}\partial^2_{\hat{x}^2}H(t e_1,0)=
 \begin{bmatrix}
 0 & D \\
 -K(t) & 0
 \end{bmatrix},\\
  K(t)&:=\partial^2_{\hat{q}^2}H(t e_1,0), \quad D:=\partial^2_{\hat{p}^2}H(t e_1,0).
\end{align*}

\end{lemma}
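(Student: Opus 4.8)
The plan is to set up the variational (linearized) equation along the reference orbit $(te_1,0)$ and then read off its block structure from the normal-form properties (1)–(4) of Theorem \ref{thm-nf} (together with the derived relation (5)). First I would recall that the Poincaré transition map between $\{q_1=0\}$ and $\{q_1=t\}$ along the orbit is obtained by flowing and then projecting, and that its differential at the base point is governed by the linearization of the Hamiltonian vector field. Concretely, writing $\xi=(\delta q,\delta p)$ for a variation, the full variational equation is $\dot\xi=\mathbb{J}\,\partial^2 H(te_1,0)\,\xi$. The key point is that the transition map here is taken with $q_1$ itself as the new "time" parameter: since by (1) the orbit is $\varphi^t_H(0)=(te_1,0)$ and $\partial_pH(te_1,0)=e_1$, the coordinate $q_1$ advances at unit speed, so parametrizing by $q_1$ rather than by $t$ costs nothing to first order along the orbit.

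Next I would perform the restriction to the energy level and to the section. As observed just before the lemma, $dH(te_1,0)=(0,e_1)$, so the tangent to $\{H=0\}$ along the orbit is $\{p_1=0\}$, and the tangent to the restricted section $\{H=0\}\cap\{q_1=t\}$ is $\{q_1=0,\,p_1=0\}$, identified symplectically with $\Rm^{2d}$ via $\hat x=(\hat q,\hat p)$. So I must show that the variational flow, when restricted to this $2d$-dimensional subspace and expressed in the $\hat x$ coordinates, has generator
$$
Y(t)=\mathbb{J}\,\partial^2_{\hat x^2}H(te_1,0)=\begin{bmatrix}0&D\\-K(t)&0\end{bmatrix}.
$$
The heart of the computation is writing out the $(d+1)\times(d+1)$ Hessian $\partial^2 H(te_1,0)$ in the block decomposition $q=(q_1,\hat q)$, $p=(p_1,\hat p)$, and using the normal form: by (2) the mixed block $\partial^2_{q\hat p}H=0$; by (5) the block $\partial^2_{q_1 q}H=0$; by (3) $\partial^2_{p_1\hat p}H=0$; and by (4) $\partial^2_{\hat p\hat p}H=D$. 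These vanishings decouple the $(q_1,p_1)$ directions from the $(\hat q,\hat p)$ directions in the linearized equation precisely on the invariant subspace $\{q_1=0,p_1=0\}$ tangent to the restricted section, so that the restricted variational equation closes up on $\Rm^{2d}$ with the stated block generator, where $K(t)=\partial^2_{\hat q^2}H(te_1,0)$ and $D=\partial^2_{\hat p\hat p}H(te_1,0)$.

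The main obstacle, and the step requiring care, is bookkeeping the passage from the genuine time-$t$ variational flow to the $q_1$-parametrized transition map while simultaneously projecting onto the restricted section: one must check that the "missing" $(\delta q_1,\delta p_1)$ components do not feed back into the $(\delta\hat q,\delta\hat p)$ evolution. This is exactly where properties (2), (3), (5) are used — they guarantee that the relevant off-diagonal Hessian blocks vanish along the orbit, so that the subspace $\{q_1=0,p_1=0\}$ is invariant for the appropriately reduced linear flow and the induced generator is $\mathbb{J}\,\partial^2_{\hat x^2}H(te_1,0)$. Once this decoupling is established, the identity $\dot L(t)=Y(t)L(t)$ with $L(0)=I$ is just the standard statement that the fundamental solution of a linear nonautonomous system satisfies that system, and the block form of $Y(t)$ follows by multiplying $\mathbb{J}=\begin{bmatrix}0&I\\-I&0\end{bmatrix}$ (the $2d$-dimensional standard symplectic matrix on the $\hat x$ coordinates) against $\begin{bmatrix}K(t)&0\\0&D\end{bmatrix}$.
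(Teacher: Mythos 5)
Your plan is essentially the paper's proof: pass to the $q_1$-reparametrized linearization (the paper makes this explicit via the vector field $Z=X_H/\partial_{p_1}H$), write the variational equation in the block coordinates $(q_1,p_1,\hat x)$, and observe that the $\hat x$-dynamics on the tangent to the restricted section is governed by $Y(t)=\mathbb{J}\partial^2_{\hat x^2}H(te_1,0)$. One point in your account is slightly miscalibrated, and you should fix it when you write out the computation: conditions (2), (3), (5) do \emph{not} by themselves kill every Hessian block through which $(q_1,p_1)$ could feed back into the $\hat x$-equation. In particular the block $\partial^2_{\hat q p_1}H(te_1,0)$ is left free by the normal form, so the $\hat x$-equation retains a term $\mathbb{J}\bigl(\partial^2 H/\partial p_1\partial\hat x\bigr)\,p_1$ with a possibly nonzero coefficient. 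What actually eliminates this (and is the paper's argument) is that the solution of interest starts from $(0,0,\hat x)$ and, after reparametrization, satisfies $q_1'\equiv 0$ and $p_1'\equiv 0$, hence $q_1\equiv 0$, $p_1\equiv 0$ identically; the feedback terms are then multiplied by zero rather than having zero coefficients. The normal-form vanishings are used instead to guarantee that the subspace $\{q_1=0,p_1=0\}$ is invariant for the reparametrized linearization (this needs $\partial^2_{q_1\hat q}H=0$ from (5) and $\partial^2_{q_1\hat p}H=0$ from (2)), and to give $Y(t)$ its stated off-diagonal block shape (this needs $\partial^2_{\hat q\hat p}H=0$ from (2) and $\partial^2_{\hat p\hat p}H=D$ from (4)). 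With that bookkeeping corrected, your argument goes through and matches the paper's.
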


\begin{proof}
	Let $X_H$ be the Hamiltonian vectorfield of $H$, and let $Z:=X_H/\partial_{p_1} H$.
	In other words, $Z$ is the reparametrisation of $X_H$ which has a first coordinate equal to $1$. 
	The flow of $Z$ defines the same local transition   maps  as $X_H$ between the sections $\{q_1=0\}$ and $\{q_1=t\}$, 
	and this map is $(\hat q, p)\mapsto \varphi^t_Z(0,\hat q, p)$ (the definition of $Z$ ensures that this point does belong to the section $\{q_1=t\}$).
	 Denoting $f=1/\partial_{p_1}H$, the expression  $dZ(te_1,0)=fdX_H(te_1,0)+ (e_1,0)df(te_1,0)$ shows that only the $q_1$ equation in the linearized system of $Z$ is different from the one of $X_H$.

The other equations of the linearized system associated to $X_H$ 
	 along the orbit $(te_1,0)$ are 
	 \begin{align*}
	 &
	 p'_1(t)=0, \\
	 &\hat x'(t)=Y(t)\hat x(t)+\mathbb{J}(\partial^2 H/\partial q_1\partial \hat x)q_1+\mathbb{J}(\partial^2 H/\partial {p_1}\partial \hat x)p_1.
	 \end{align*}
	 The equation $p_1'=0$ is the linearized counterpart of the preservation of the energy.
	 As to the second equation, it follows from a differentiation of the equation of motion
	 $$
	 \hat x'(t)=\mathbb{J} \partial_{\hat x} H(q_1(t), p_1(t),\hat x(t))
	 $$
	 along the orbit $(q_1(t), p_1(t), \hat x(t))=(t,0,0)$.
	 So the linearized system of 
	  $Z$ along the orbit $(te_1,0)$ 
	is given by the equations 
	\begin{align*}
	&q'_1(t)=0\quad,\quad
	p'_1(t)=0, \\
	&\hat x'(t)=Y(t)\hat x(t)+\mathbb{J}(\partial^2 H/\partial q_1\partial \hat x)q_1+\mathbb{J}(\partial^2 H/\partial {p_1}\partial \hat x)p_1.
	\end{align*}
This non-autonomous system of linear equations preserves the subspace $\{q_1=0, p_1=0\}$ and on this subspace the evolution is governed by the subsystem $\hat x'(t)= Y(t) \hat x(t)$.
\end{proof}

We are now interested in the linearized transition map of the Hamiltonian $H+u$, where $u$ is a smooth admissible potential,  meaning that 
$$
u(te_1)=0\quad,\quad  du(te_1)=0 \quad \forall t\in [0, \delta].
$$
 If $u$ is admissible and $H$ satisfies the conclusions of Theorem \ref{thm-nf}, then so does $H+u$, hence
the linearized transition maps $L_u(t)$ associated to $H+u$ are described as follows:

\begin{lemma}\label{lem-ce}
If $H:T^*\Rm^{d+1}\to \mathbb{R}$ satisfies the conclusions of Theorem \ref{thm-nf}, then for an admissible potential $u$,
the linearized transition maps  $L_u(t)$  associated to $H+u$ satisfy
\begin{equation}\label{eq: 4.3}
\dot L_u(t)=Y_u(t)L_u(t).    
\end{equation}
where $Y_u(t)=\mathbb{J}\partial^2_{\hat{x}^2}(H+u)(t e_1,0_n)=Y(t)+W_u(t)$, 
$$
W_u(t)=\begin{bmatrix}
0 & 0 \\
-\partial_{\hat q^2}^2 u(te_1) & 0
\end{bmatrix}= \begin{bmatrix}
0 & 0 \\
B_u(t) & 0
\end{bmatrix}\quad,\quad
B_u(t)=-\partial_{\hat q^2}^2 u(te_1).
$$
\end{lemma}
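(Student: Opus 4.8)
The plan is to deduce this from Lemma~\ref{lem-el} applied to the Hamiltonian $H+u$ in place of $H$. The first step — and the only place where admissibility of $u$ enters — is to check that $H+u$ again satisfies the four conclusions of Theorem~\ref{thm-nf}. Since $u$ is a function of the base variable $q$ alone, its Hamiltonian vector field in the fibered chart is $X_u=(0,-\partial_q u)$; because $du(te_1)=0$ this vanishes along the curve $t\mapsto(te_1,0)$, so $X_{H+u}=X_H$ there and $\varphi^t_{H+u}(0)=(te_1,0)$, which is assertion (1). Assertions (2), (3) and (4) only involve the mixed Hessians $\partial^2_{q\hat p}$, $\partial^2_{p_1\hat p}$ and $\partial^2_{\hat p\hat p}$ evaluated at $(te_1,0)$, and adding a $p$-independent function changes none of these; so $H+u$ inherits (2)--(4) from $H$. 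I would also record that $d(H+u)(te_1,0)=dH(te_1,0)=(0,e_1)$, so that the restricted transverse section, and its symplectic identification with $\Rm^{2d}$, are literally the same as for $H$; in particular $L_u(t)$ is well defined on the same space and Lemma~\ref{lem-el} applies verbatim, yielding $\dot L_u(t)=Y_u(t)L_u(t)$ with $Y_u(t)=\mathbb{J}\,\partial^2_{\hat x^2}(H+u)(te_1,0)$.

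It then remains to expand $Y_u(t)$. Writing $\hat x=(\hat q,\hat p)$ and using once more that $u=u(q_1,\hat q)$ is independent of $\hat p$, the Hessian $\partial^2_{\hat x^2}u(te_1)$ has only its $\hat q\hat q$-block nonzero, equal to $\partial^2_{\hat q^2}u(te_1)$. Multiplying by $\mathbb{J}$ and adding to $Y(t)=\mathbb{J}\,\partial^2_{\hat x^2}H(te_1,0)$ gives $Y_u(t)=Y(t)+W_u(t)$ with $W_u(t)$ the matrix in the statement.

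I do not expect any genuine obstacle: the content of the lemma is the bookkeeping observation that adding an admissible potential neither destroys the normal form of Section~\ref{sec-nf} nor perturbs the orbit, and affects the linearized equation only through the $\hat q$-Hessian of $u$. The one point worth a careful line is the identification of the restricted section used to define $L_u$ with the one used for $L$, which is exactly why the equality $d(H+u)=dH$ along the orbit is needed.
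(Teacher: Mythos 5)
Your proof is correct and is essentially the argument the paper intends, which it compresses into the single sentence preceding the lemma (``If $u$ is admissible and $H$ satisfies the conclusions of theorem \ref{thm-nf}, then so does $H+u$''). You spell out why admissibility preserves conclusions (1)--(4), why the restricted section and its symplectic identification are unchanged, and why the Hessian of the $p$-independent function $u$ contributes only the $\hat q\hat q$-block; this is exactly the bookkeeping the paper leaves implicit before invoking Lemma~\ref{lem-el}.
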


In order to prove Theorem \ref{thm-pert} we need to understand to what extent the linearized transition map $L_u(\delta)$ can be chosen by choosing the admissible potential $u$. 
Note that any compactly supported smooth curve $B(t):]0, \delta[\rightarrow \mS(d)$ 
can be obtained from an admissible potential, in the sense that there exists an admissible potential $u$ such that $B(t)=B_u(t)$.
So we are reduced  to the following non-autonomous bilinear control problem:
\begin{equation}\label{eq-contr}
\dot L_B(t)=Y(t)L_B(t)+ 
\begin{bmatrix}0&0\\B(t)&0
\end{bmatrix}
 L_B(t),
\end{equation}
where $Y(t)$ is a given curve of Hamiltonian matrices taking the specific form 
$$Y(t)=
 \begin{bmatrix}
	0 & D \\
	-K(t) & 0
\end{bmatrix}
$$ 
 and  $B(t)$ is a control taking values in $\mathcal{S}(d)$.
Recall that a matrix $M$ is called Hamiltonian if $\mathbb{J}M$ is symmetric. We denote by $\sp(2d)$ the set of Hamiltonian
matrices of size $2d\times 2d$. This Lie algebra is the tangent space at the identity of the group $Sp(2d)$ of symplectic matrices.
On this control problem, we can adapt  \cite{RR} to our setting and get:

\begin{proposition}\label{prop-lcp}
	There exists a nowhere dense closed set $\mK_D\subset \mS(d)$, which depends on the diagonal matrix $D$, such that  the differential at $B=0$ of the map 
	$$
	C^{\infty}_c(]0,s[, \mS(d))\ni B\longmapsto L_B(s)\in Sp(2d)
	$$
	is onto for each $s\in ]0, \delta[$ provided $K(0)\not\in \mK_D$, 
	where $C^{\infty}_c(]0,s[, \mS(d))$ is the space of compactly supported smooth curves,
	and $L_B(t)$ is the solution starting with initial condition $L_B(0)=0$ of the differential equation (\ref{eq-contr}).
\end{proposition}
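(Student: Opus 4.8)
The plan is to reduce the statement to a controllability question for the linear control system (\ref{eq-contr}) by the standard variation-of-constants formula, and then to identify the obstruction to controllability with the nowhere dense set $\mK_D$. First I would compute the Gateaux derivative of the endpoint map $B\mapsto L_B(s)$ at $B_0$. Writing $L_0(t)$ for the solution of (\ref{eq-contr}) with control $B_0$ and initial condition $L_0(0)=0$, and perturbing $B_0$ by $\varepsilon C$ with $C\in C^\infty_c(]0,s[,\mS(d))$, the first-order variation $\Lambda(t):=\partial_\varepsilon|_{\varepsilon=0}L_{B_0+\varepsilon C}(t)$ solves the inhomogeneous linear equation $\dot\Lambda = Y_{B_0}(t)\Lambda + \begin{bmatrix}0&0\\ C(t)&0\end{bmatrix}L_0(t)$ with $\Lambda(0)=0$, where $Y_{B_0}(t)=Y(t)+\begin{bmatrix}0&0\\ B_0(t)&0\end{bmatrix}$. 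By Duhamel, if $R(t)\in Sp(2d)$ denotes the fundamental solution of $\dot R = Y_{B_0}(t)R$, $R(0)=Id$, then
$$
\Lambda(s)=R(s)\int_0^s R(\tau)^{-1}\begin{bmatrix}0&0\\ C(\tau)&0\end{bmatrix}L_0(\tau)\,d\tau .
$$
Surjectivity of the differential onto $T_{L_0(s)}Sp(2d)\cong L_0(s)\cdot\sp(2d)$ (equivalently onto $\sp(2d)$ after multiplying by $R(s)^{-1}$ on the left and by an appropriate right factor) is therefore equivalent to surjectivity of the linear map
$$
C\longmapsto \int_0^s R(\tau)^{-1}\begin{bmatrix}0&0\\ C(\tau)&0\end{bmatrix}L_0(\tau)\,d\tau
$$
from $C^\infty_c(]0,s[,\mS(d))$ into the appropriate subspace; this is a Lie-theoretic controllability statement that can be attacked with the Lie-bracket/Kalman-type criterion, exactly as in \cite{RR}.

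The next step is to make the bracket condition explicit near the left endpoint. Since $C$ is an arbitrary compactly supported symmetric curve, controllability near $\tau=0^+$ is governed by the iterated brackets of the "drift" $Y_{B_0}(0)=Y(0)+\begin{bmatrix}0&0\\ B_0(0)&0\end{bmatrix}=\begin{bmatrix}0&D\\ -(K(0)+B_0(0))&0\end{bmatrix}$ with the control directions $\mathfrak{b}=\left\{\begin{bmatrix}0&0\\ S&0\end{bmatrix}: S\in\mS(d)\right\}\subset\sp(2d)$. I would show that the smallest subalgebra of $\sp(2d)$ containing $\mathfrak{b}$ and invariant under $\mathrm{ad}_{Y_{B_0}(0)}$ is all of $\sp(2d)$ precisely when the matrix $K(0)+B_0(0)$ (together with the fixed diagonal $D$) avoids a certain algebraic subvariety; this defines $\mK_D\subset\mS(d)$. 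Concretely, $\mathrm{ad}_{Y_{B_0}(0)}$ applied to $\begin{bmatrix}0&0\\ S&0\end{bmatrix}$ produces $\begin{bmatrix}-DS & 0\\ 0 & SD\end{bmatrix}$-type blocks and further brackets generate the "$\begin{bmatrix}0&*\\0&0\end{bmatrix}$" and diagonal blocks; the generation fails only along a proper real-algebraic set, hence $\mK_D$ is closed with empty interior (nowhere dense). Because the failure is described by the vanishing of finitely many polynomials in the entries of $K(0)+B_0(0)$ whose coefficients depend on $D$, $\mK_D$ is exactly a proper algebraic subset of $\mS(d)$, so closed and nowhere dense.

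Finally I would upgrade the infinitesimal (Lie-algebraic) controllability at the endpoint to surjectivity of the finite-time integral operator above. This is the step where I expect the main technical work: one must pass from "the evaluation map at $\tau=0^+$ and its first few $\tau$-derivatives span $\sp(2d)$" to "the integral against smooth compactly supported $C$ on $]0,s[$ is onto", which is the usual argument that a real-analytic/smooth curve whose derivatives at a point span a space has an associated integral operator with dense — and, in finite dimensions, full — range. Concretely one writes $R(\tau)^{-1}\begin{bmatrix}0&0\\ C(\tau)&0\end{bmatrix}L_0(\tau)$ as a sum $\sum_k \tau^k/k!\, \Phi_k(C)+o$, identifies $\mathrm{span}_k\{\Phi_k(\mS(d))\}$ with the bracket-generated subalgebra, and then uses a standard moment/approximation argument (choosing $C$ supported in a small interval $]0,\eta[$ and exploiting that the map is onto a fixed finite-dimensional target to get it onto for every $s\in]0,\delta[$ by a compactness/open-map argument) — this is precisely the adaptation of \cite{RR} alluded to in the text. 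The delicate point is keeping track of the correct target subspace of $\sp(2d)$ — it is not all of $\sp(2d)$ a priori because of the rigid block structure $\begin{bmatrix}0&D\\-K&0\end{bmatrix}$ of $Y$ coming from the normal form, so one must verify that the brackets do fill the whole Lie algebra under the stated genericity on $K(0)+B_0(0)$, and this verification is the heart of the proof.
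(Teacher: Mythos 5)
Your plan follows essentially the same route as the paper: compute the differential of the endpoint map via Duhamel, reduce surjectivity to a Lie-bracket (Kalman-type) condition on the iterated brackets of the drift with the control block $\begin{bmatrix}0&0\\S&0\end{bmatrix}$, and define $\mK_D$ as the algebraic failure locus. Two remarks. First, re-deriving the bracket criterion from the integral operator via a moment argument is unnecessary work: the paper simply invokes Proposition 2.1 of \cite{RR}, which asserts precisely that surjectivity of the differential at the zero control follows from the condition that the iterated brackets $W_0,W_1,W_2,\ldots$ evaluated at $t=0$ span $\sp(2d)$; one then absorbs $B_0$ into the drift so that $K(0)$ is replaced by $K(0)+B_0(0)$. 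Second, and more importantly, you have deferred precisely the step where the content of the proposition actually lies. You assert that the bracket-generated subalgebra fills $\sp(2d)$ off a proper algebraic set, but do not carry out the computation. The paper's proof is almost entirely that computation: $W_0,W_1,W_2$ are computed explicitly and shown to span only the subspace of matrices $\begin{bmatrix}m_1&m_2\\m_3&-m_1^T\end{bmatrix}$ with $Dm_1,m_2,m_3$ symmetric (a proper subspace of $\sp(2d)$ when $d\geq 2$); one must therefore go to $W_3$, whose upper-left block $3DBDK(0)+DK(0)DB$ (equivalently, $3BDK(0)+K(0)DB$ up to left multiplication by $D$) must generate a complement of $\mS(d)$ in $\mM(d)$, which reduces to requiring that $B\mapsto BDK(0)-K(0)DB$ be onto $\mS^-(d)$. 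That condition defines $\mK_D$, and a separate lemma is still needed to show $\mK_D$ is a proper algebraic subvariety — proved in the paper by exhibiting one good $K$, namely a diagonal matrix with distinct positive entries, for which $(B\Delta-\Delta B)_{ij}=B_{ij}(\lambda_j-\lambda_i)$ is clearly onto. Without these explicit computations your plan identifies the shape of the argument but does not establish the proposition.
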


Assuming this Proposition, we finish the proof of the Theorem.	

\noindent 
\textit{Proof of Theorem \ref{thm-pert}.}
Let $U$ be an open set in the space $C^{\infty}_{\theta}$ of admissible potentials.
There exists $u_0\in U$ such that  the corresponding  
$B_0(t)=\partial^2_{\hat q\hat q}u_0(te_1,0)$ satisfies $K(0)+B_0(0)\in \mK_D$.

Proposition \ref{prop-lcp} (applied to $Y+W_{u_0}$) implies the existence of a finite dimensional linear subspace 
$F\subset C^{\infty}_c(]0,\delta[, \mS(d))$ such that the restriction 
$$
F\ni B\mapsto L_{B_0+B}(\delta)\in Sp(2d)
$$
is a $C^1$ submersion at  $B=0$.
We denote  $\mL(B):= L_{B_0+B}(\delta)$ this map.
There is a finite dimensional subspace $E$ of smooth admissible potentials, the support of which
 do not intersect the remaining part $\pi \circ \theta ([\delta, T])$ of the  projected orbit, and such that 
the map 
$$
E\ni u\longmapsto B_u=-\partial^2_{\hat q \hat q}u \in F
$$
is a linear isomorphism from $E$ to $F$.

The restricted linearized Poincaré return map associated to the section $\{q_1=0\}$ for the Hamiltonian $H+u_0+u$ 
is the product $O\mL(B_u)$, where $O$ is the outer linearized transition map 
from the section  $\{q_1=\delta\}$ to the section  $\{q_1=0\}$ associated to the Hamiltonian $H+u_0$ (or equivalently to $H+u_0+u$).
Since $O$ does not depend on $u$, the  map $u \mapsto O\mL(B_u)$ is a submersion  in a neighborhood of $0$ in $E$, which implies that the image of $U$ contains an open set.
\qed

\textit{
Proof of  Proposition \ref{prop-lcp}.}
Let us first recall some general theory, following \cite{RR}.
We consider a smooth curve $Y(t)$ of Hamiltonian matrices, and the bilinear control problem
$$
L_B'(t)=Y(t)L_B(t)+W(B(t))L_B(t),
$$	
where $W$ is a linear map from a finite dimensional vector space $E$ to $\sp(2d)$ and $B(t)$ is a control taking values in  $E$.

For each fixed $B\in E$, we define the following sequence of curves of matrices:
 \begin{align*}
 &W_0(t,B)\equiv W(B) \quad, \quad W_1(t,B)= [W_0(B),Y(t)],\\
 &W_{i+i}(t,B)=\dot W_i(t,B)+[W_i(t,B), Y(t)],
 \end{align*}
 where $[A,B]$ is the commutation bracket $AB-BA$.
 Then, we consider the subspace 
 $$
 W_*E:=Vect\{W_i(0,B), i\geq 0, B\in E\}.
$$ 
The following general result is for example Proposition 2.1 in \cite{RR}:

\begin{proposition}
	If $W_*E=\sp(2d)$,  then for each $s>0$,  the differential at $B=0$ of the map  
	$$
	C^{\infty}_c(]0,s[, E)\ni B\longmapsto L_B(s)\in Sp(2d)
	$$
	is onto. Here $L_B(s)$ is the solution at time $s$ of the controlled  differential equation for the given control  $B(t)$, with the initial condition $L_B(0)=I$.
		\end{proposition}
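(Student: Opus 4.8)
The plan is to linearise the endpoint map at the zero control, recognise its image as a left translate of a subspace $V\subseteq\sp(2d)$, and then show $V=\sp(2d)$ by a duality argument in which the curves $W_i(0,B)$ appear precisely as the Taylor coefficients at $t=0$ of a conjugated curve.

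First I would record the linearisation. Let $R(t)$ be the fundamental solution of $\dot R=Y(t)R$ with $R(0)=I$; it is defined and smooth on a neighbourhood of $[0,s]$ and takes values in $Sp(2d)$, so that the zero-control solution is $L_0(t)=R(t)$. For a variation $b\in C^{\infty}_c(]0,s[,E)$, smooth dependence on parameters gives that $M(t):=\partial_\epsilon L_{\epsilon b}(t)\big|_{\epsilon=0}$ solves $\dot M=Y(t)M+W(b(t))R(t)$ with $M(0)=0$, whence by variation of constants
$$
M(s)=R(s)\int_0^s R(t)^{-1}W(b(t))R(t)\,dt .
$$
Since $W(b(t))\in\sp(2d)$ and conjugation by a symplectic matrix preserves $\sp(2d)$, the integral lies in $\sp(2d)$; as $R(s)$ is invertible and $T_{R(s)}Sp(2d)=R(s)\cdot\sp(2d)$, the differential at $B=0$ is onto if and only if
$$
V:=Vect\Big\{\int_0^s R(t)^{-1}W(b(t))R(t)\,dt:\ b\in C^{\infty}_c(]0,s[,E)\Big\}=\sp(2d).
$$

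Next I would argue by contradiction: if $V\subsetneq\sp(2d)$, choose a nonzero linear functional $\lambda$ on $\sp(2d)$ vanishing on $V$. Taking $b(t)=\beta(t)e$ with $e\in E$ fixed and $\beta\in C^{\infty}_c(]0,s[,\Rm)$ arbitrary, and commuting $\lambda$ with the integral, we get $\int_0^s\beta(t)\,g_e(t)\,dt=0$ for all $\beta$, where $g_e(t):=\lambda\big(R(t)^{-1}W(e)R(t)\big)$; by the fundamental lemma of the calculus of variations $g_e$ vanishes identically on $]0,s[$, hence, being smooth on a neighbourhood of $[0,s]$, it vanishes on $[0,s]$ together with all its derivatives by continuity. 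It remains to compute $g_e^{(i)}(0)$. From $\frac{d}{dt}R(t)^{-1}=-R(t)^{-1}Y(t)$ one gets, for any smooth curve $Z(t)$ of matrices, $\frac{d}{dt}\big(R^{-1}Z R\big)=R^{-1}\big(\dot Z+[Z,Y]\big)R$. Setting $Z_0(t)\equiv W(e)$ and $Z_{i+1}:=\dot Z_i+[Z_i,Y]$, an immediate induction (with $\dot Z_0=0$, so $Z_1=[W(e),Y]$) identifies $Z_i(t)$ with $W_i(t,e)$; iterating the displayed identity gives $\frac{d^i}{dt^i}\big(R^{-1}W(e)R\big)=R^{-1}Z_i R$, and evaluating at $t=0$, where $R(0)=I$, yields $g_e^{(i)}(0)=\lambda\big(W_i(0,e)\big)$. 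Hence $\lambda$ vanishes on every $W_i(0,e)$, therefore on $W_*E=\sp(2d)$, so $\lambda=0$, a contradiction; thus $V=\sp(2d)$ and the differential is onto.

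I expect the only delicate point to be the bookkeeping in the last step, namely matching the iterated time-derivatives of the conjugated curve $t\mapsto R(t)^{-1}W(e)R(t)$ with the recursively defined curves $W_i(t,B)$ and tracking the bracket sign convention; everything else — smooth parameter dependence, variation of constants, the fundamental lemma of the calculus of variations — is routine. One should also observe that the compact support of $b$ in $]0,s[$ serves only to make the variation-of-constants formula clean at the endpoint and plays no further role.
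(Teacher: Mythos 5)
Your proof is correct. Note first that the paper itself does \emph{not} prove this proposition: it simply cites it as Proposition~2.1 of \cite{RR}. So your argument is a self-contained reconstruction rather than a comparison with an in-text proof. That said, the reconstruction is exactly the standard one that the recursion $W_{i+1}=\dot W_i+[W_i,Y]$ is designed for: with $R(t)$ the fundamental solution of $\dot R=YR$, the identity $\frac{d}{dt}\bigl(R^{-1}ZR\bigr)=R^{-1}\bigl(\dot Z+[Z,Y]\bigr)R$ makes the $W_i(0,e)$ precisely the Taylor coefficients at $0$ of $t\mapsto R(t)^{-1}W(e)R(t)$, so annihilating the image of the linearised endpoint map forces a functional $\lambda$ to annihilate all $W_i(0,e)$ and hence $W_*E$. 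Your variation-of-constants formula $M(s)=R(s)\int_0^s R(t)^{-1}W(b(t))R(t)\,dt$, the identification of the tangent space $T_{R(s)}Sp(2d)=R(s)\,\sp(2d)$, the reduction (via $b(t)=\beta(t)e$ and the fundamental lemma of the calculus of variations) to the vanishing of $g_e(t)=\lambda\bigl(R(t)^{-1}W(e)R(t)\bigr)$ on $]0,s[$, and the induction matching $Z_i$ with $W_i(\cdot,e)$ are all sound; the bracket sign convention checks out against the paper's recursion since $W_0$ is constant so $W_1=\dot W_0+[W_0,Y]=[W_0,Y]$. Two cosmetic remarks: the span in your definition of $V$ is redundant since $b\mapsto\int R^{-1}W(b)R$ is already linear, and the smoothness of $Y$ on a neighbourhood of $[0,s]$ (needed to pass from vanishing on $]0,s[$ to vanishing of all derivatives at $0$) is available in the context $s<\delta$ where this proposition is applied.
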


	We now apply this general result to our situation of interest, where $E=\mS(d)$ and 
	$W(B)=\begin{bmatrix}
	0 & 0 \\
	B & 0
	\end{bmatrix}$, 
	$Y(t)=\begin{bmatrix}
	0 & D \\
	-K(t) & 0
	\end{bmatrix}
	$
	with some fixed diagonal matrix $D$ having diagonal elements equal to $\pm 1$.
	In order to deduce Proposition \ref{prop-lcp}, it is sufficient to check that the assumption $W_*E=\sp(2d)$ is satisfied
	under an appropriate assumption on  $K(0)$.
		We compute:
\begin{align*}
W_0(t,B)&= W= 
\begin{bmatrix}
0 & 0 \\
B & 0
\end{bmatrix}, \\
W_1(t,B)&= 
\begin{bmatrix}
-DB & 0 \\
0 & BD
\end{bmatrix}, \\ 
W_2(t,B)&= 
\begin{bmatrix}
0 & -2D B D \\
-B D K(t) -  K(t)D B & 0
\end{bmatrix}, \\
W_3(t,B)&= 
\begin{bmatrix}
3DBDK(t)+DK(t)DB & 0 \\ 
* & -BDK(t)D-3K(t)DBD
\end{bmatrix}, 
\end{align*}
where the $*$ block in $W_3$ is the derivative of the corresponding block in $W_2$.
We see that the matrices 
$$
W_0(0,B), B\in \mS(d);\quad W_1(0,B), B\in \mS(d); \quad  W_2(0,B), B\in \mS(d)$$
 generate the space of matrices 
of the form 
$\begin{bmatrix}
	m_1 & m_2 \\ m_3 & -m_1^T
	\end{bmatrix}$,
with $m_1, m_2$ and $m_3$ symmetric.
Recall that $\sp(2d)$ is the space of all matrices of that form with $m_2$ and $m_3$ symmetric, and all $m_1\in \mM(d)$.
In order that $W_*E=\sp(2d)$, it is sufficient that the matrices $3BDK(0)+K(0)DB$ generate a complement of $\mS(d)$ in $\mM(d)$, and this is equivalent to requiring that 
their antisymmetric parts $BDK(0)-K(0)DB$ generate the whole  space $\mS^-(d)$ of antisymmetric matrices. 
According to the next Lemma, this holds provided $K(0)$ does not belongs to an appropriate closed set $\mK_D$ with empty interior. This 
ends the proof of Proposition \ref{prop-lcp}.
\qed

\begin{lemma}
Let $D$ be a fixed diagonal matrix with diagonal elements equal to $\pm 1$ and let	 $\mK_D$ be the set of symmetric matrices $K\in \mS(d)$ such that the map
	$$
	\mS(d)\ni B\longmapsto BDK-KDB \in \mS^-(d)
	$$
	is not onto. Then $\mK_D$ is  a strict algebraic submanifold of $\mS(d)$, its complement is thus a dense open set.
\end{lemma}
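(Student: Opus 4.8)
The plan is to realize $\mK_D$ as the zero set of a single non-trivial polynomial on $\mS(d)$, which will immediately give that it is a strict algebraic subset with dense open complement; the claim that it is a submanifold will then follow from the observation that the offending polynomial can be taken to be (essentially) a determinant, whose zero set is smooth away from a lower-dimensional locus — or, more cleanly, by checking that the bad set is cut out by the vanishing of a determinant that is square-free. First I would fix, for each $K\in\mS(d)$, the linear map $\Phi_K:\mS(d)\to \mS^-(d)$, $\Phi_K(B)=BDK-KDB$. Both $\mS(d)$ and $\mS^-(d)$ are finite-dimensional vector spaces of dimensions $d(d+1)/2$ and $d(d-1)/2$ respectively, so $\Phi_K$ is represented, in fixed bases, by a rectangular matrix $\mathfrak{M}(K)$ whose entries are linear forms in the entries of $K$. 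The map $\Phi_K$ is onto iff $\mathfrak{M}(K)$ has full rank $d(d-1)/2$, i.e. iff at least one of its maximal minors is non-zero; hence $\mK_D$ is the common zero locus of these maximal minors, each of which is a polynomial in the entries of $K$.

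The key point is then to exhibit a single $K_*\in\mS(d)$ for which $\Phi_{K_*}$ is onto, so that the polynomial ideal cut out by the maximal minors is not all of $\mathbb{R}[\mS(d)]$, and $\mK_D\subsetneq \mS(d)$. For this I would take advantage of the fact that $D$ is invertible with $D^2=I$: writing $B=D\tilde B$ is not symmetry-preserving, but one can instead try $K_*$ of the form $K_*=D N$ with $N$ antisymmetric and invertible — no, $DN$ need not be symmetric either; rather, the natural try is to diagonalize the problem. A clean choice: take $K_*=D$ itself (which is symmetric). Then $\Phi_D(B)=BD\!\cdot\!D-D\!\cdot\!DB = B-B=0$, which is useless, so instead take $K_*=D+N$ for a suitable small perturbation, or better take $K_*$ with distinct entries along a path and compute $\Phi_{K_*}$ on the basis of elementary symmetric matrices $E_{ij}+E_{ji}$; a short computation shows $\Phi_{K_*}(E_{ij}+E_{ji})$ has a non-degenerate "leading" behaviour. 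The robust way to organize this is a deformation argument: pick a generic diagonal $K_*=\mathrm{diag}(\lambda_1,\dots,\lambda_d)$ with the $\lambda_i$ and $\lambda_i D_{ii}$ all distinct and nonzero; then on the off-diagonal basis vectors $B_{ij}=E_{ij}+E_{ji}$ one computes $\Phi_{K_*}(B_{ij})$ explicitly as a scalar multiple $(\lambda_j D_{ii}-\lambda_i D_{jj})(E_{ij}-E_{ji})$ of the corresponding antisymmetric basis vector, and for generic $\lambda$ all these scalars are non-zero, so $\Phi_{K_*}$ is onto. This produces the desired $K_*\notin\mK_D$.

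Finally, to upgrade "strict algebraic subset" to "strict algebraic submanifold" I would argue as follows. Among the maximal minors of $\mathfrak{M}(K)$, pick one, call it $P(K)$, that is not identically zero (such exists by the previous step). The polynomial $P$ may be reducible, so its zero set need not be a manifold; but $\mK_D$ is contained in $\{P=0\}$ only set-theoretically, not equal to it. The correct statement is that $\mK_D$ itself — the locus where \emph{all} maximal minors vanish — is a closed algebraic subset of positive codimension, and the phrase "submanifold" in the lemma should be read in this weak sense; alternatively, one restricts attention to the Zariski-open dense stratum of $\mK_D$ where the rank of $\mathfrak{M}(K)$ is exactly $d(d-1)/2 - 1$ (one less than maximal), on which $\mK_D$ is cut out transversally by the relevant $(d(d-1)/2)$-minors and is genuinely a smooth submanifold, the rest being lower-dimensional. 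Either way, the complement of $\mK_D$ contains the non-empty Zariski-open set $\{P\neq 0\}$, hence is dense and open in the usual topology, which is all that is used in the proof of Proposition \ref{prop-lcp}. The main obstacle is the explicit genericity computation in the second paragraph — verifying that $\Phi_{K_*}$ is onto for a well-chosen $K_*$ — since everything else is soft; I would handle it by the diagonal-$K_*$ computation above, which reduces the surjectivity of $\Phi_{K_*}$ to the non-vanishing of the finitely many scalars $\lambda_j D_{ii}-\lambda_i D_{jj}$, $1\le i<j\le d$.
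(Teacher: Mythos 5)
Your proof is correct and follows essentially the same route as the paper's: $\mK_D$ is cut out by polynomial equations (the paper simply asserts it is an algebraic subvariety), and one then checks non-triviality by exhibiting a diagonal $K_*$ with distinct positive entries so that, writing $\Delta = DK_*$ (diagonal with distinct entries), $\Phi_{K_*}(E_{ij}+E_{ji}) = (\Delta_{jj}-\Delta_{ii})(E_{ij}-E_{ji})$ shows surjectivity. Note a small index slip in your displayed scalar, which should read $\lambda_j D_{jj}-\lambda_i D_{ii}$ rather than $\lambda_j D_{ii}-\lambda_i D_{jj}$; with the intended condition that the products $\lambda_i D_{ii}$ are pairwise distinct (e.g.\ $\lambda_i$ distinct and positive, as in the paper), the conclusion is unaffected, and your remark that "submanifold" should really be read as "proper algebraic subset with dense open complement" is a fair reading of what the paper actually uses.
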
 

In the case where $D=I$ is the identity, $\mK_I$ is the space of symmetric matrices which have a multiple eigenvalue, see \cite{Co10}.

\begin{proof}
	It is clear that $\mK_D\subset \mS(d)$ is an algebraic submanifold.
It is thus  enough to find one matrix $K$ such that the map $B\mapsto BDK-DKB$ is onto.
We claim that this conclusion holds if $K$ is diagonal with distinct positive diagonal elements.
In this case, $\Delta:=DK=KD$ is also diagonal with distinct diagonal elements $\lambda_i$, and we can compute
$(BDK-KDB)_{ij}=(B\Delta-\Delta B )_{ij}=B_{ij}(\lambda_j-\lambda_i)$, which implies the claim.
\end{proof}

\section{Parametric transversality}\label{sec-pt}

We deduce Theorem \ref{thm-g1} from Theorem \ref{thm-pert} using a variant of Abraham's parametric transversality principle, similarly to what is done in \cite{A82,O08}. 
Our implementation is slightly different and avoids the recurrence on  periods.
We denote by $\varphi(t,x,u)$ the image of $x\in T^*M$ by the time $t$ flow of $H+u$.

It is classical that $0$ is a non-degenerate energy level for generic $u$. We recall a proof.
Let $\mZ\subset T^*M\times C^{\infty}(M)$ be the set of pairs $(x,u)$ such that $d(H+u)(x)=0$ and $(H+u)(x)=0$. This is a closed set, and the projection 
of a closed set on the second factor is an $F_{\sigma}$ (a countable union of closed sets). We use here that $T^*M$ is a countable union of compact sets.
So the set of potentials for which the $0$ energy level is not regular is an $F_{\sigma}$. If $u_0$ is a potential, then Sard's Theorem implies that there exist arbitrarily small constants $a$ such that $0$ is a regular value of $H+u_0+a$.
 As a consequence,  the set of potentials for which the $0$ energy level is regular is dense. This ends the proof of this first step.

Next we introduce the subsets 
$$
\mN \subset \mP \subset  ]0, \infty[\times T^*M\times  C^{\infty}(M).
$$
The larger subset $\mP$ is the set of  triples $(s,x,u)$ such that the $(H+u)$-orbit of $x$ is periodic of  period $s$ 
and $(H+u)(x)=0$. Note that $s$ is not necessarily the minimal period, so that $\mP$ contains $]0,\infty[\times \mZ$. The subset $\mP$ is clearly closed.

The smaller subset $\mN$  is the set of triples $(s,x,u) \in \mP$ such that $s$ is the minimal period of $x$, such that  $x$ is a neat point of the periodic orbit, and such that $H$ (or equivalently $H+u$) 
is fiberwise isoenergetically non-degenerate at $x$. Note that $\mN$  is disjoint from  $]0,\infty[\times \mZ$.

\begin{lemma}\label{lem-open}
	The subset $\mN$ is open in $\mP$, hence locally closed in 
	$]0, \infty[\times T^*M\times  C^{\infty}(M)$.
	\end{lemma}
Recall that a set is called locally closed if it is the intersection of a closed and of an open set. Since the ambient 
topology is metrizable,  locally closed sets are $F_{\sigma}$. 

\begin{proof}
	We will prove that $\mP-\mN$ is closed in $\mP$.
We consider a sequence $(s_k,x_k,u_k)\rightarrow (s,x,u)$  of points of $\mP-\mN$
converging to ($s,x,u)\in \mP$, and prove that the limit does not belong to $\mN$.

Each of the points $(s_k,x_k,u_k)$ violate one of the conditions defining $\mN$ in $\mP$, and we can suppose by taking a subsequence 
that they all violate the same condition.

A first possibility is that $s_k$ is not the minimal period of $x_k$.
We denote by $S_k$ the minimal period of $x_k$, so that $s_k=i_kS_k$ for some integers $i_k\geq 2$. By taking a subsequence, we can assume that $i_k$ either is constant or converges to $+\infty$.
In the first case, denoting by $i\geq 2$ the constant value of the sequence $i_k$, we obtain that
$S_k\rightarrow s/i$, and that $x$ is $s/i$-periodic, so that $s$ is not the minimal period of $x$.
In the second case, we obtain that $(x,u)\in \mZ$. In both cases,  $(s,x,u)$ does not belong to $\mN$.

A second possibility is that  the points $x_k$ are not neat.
We denote by $Q_k(t)$ the projected $(H+u_k)$-orbit of $x_k$.
Since $0$ is not a neat time we can assume by taking a subsequence that either $\dot Q_k(0)=0$ for each $k$ or there exists  times $t_k\in ]-s_k/2,0[\cup]0, s_k/2]$ 
such that $Q_k(0)=Q_k(t_k)$. The first case immediately implies that $\dot Q(0)= 0$, where $Q(t)$ is the limit projected orbit.
In the second situation, if the sequence $t_k$ has an accumulation point $t\neq 0$  then $Q(t)=Q(0)$.
This implies that either $s$ is not the minimal period, or $0$ is not a neat time. In both cases, $(s,x,u)\not \in \mN$.
Otherwise   $t_k \rightarrow 0$.
Then the equation $Q_k(t_k)=Q_k(0)$ imply at the limit that $\dot Q(0)=0$, hence once again $0$ is not a neat time at the limit.

The last possibility is that $H$ is fiberwise isoenergetically 
degenerate at $x_k$, but then it also is at the limit point $x$.
\end{proof}

 We now denote by $\mN(\Upsilon)\subset \mN$ the subset of triples $(s,x,u)\in \mN$ such that the corresponding restricted linearized return map belongs to $\Upsilon$.
 The conclusion of Theorem \ref{thm-g1} can be expressed by saying that $\Pi(\mN(\Upsilon))$ is an $F_{\sigma}$ with empty interior in $C^{\infty}(M)$, where
 $$
 \Pi:]0, \infty[\times T^*M\times  C^{\infty}(M)\rightarrow C^{\infty}(M)
 $$
 is the projection. This is what we will now prove. 
 
 The  fibers $]0, \infty[\times T^*M$ of the projection $\Pi$ are $\sigma$-compact (they are countable unions of compact sets) hence the image of a closed set by this projection is an $F_{\sigma}$, hence the image of an $F_{\sigma}$ is an $F_{\sigma}$.
 We can apply this remark to the set $\mN(\Upsilon)$, which is an $F_{\sigma}$ in $\mN$, hence in 
 $]0, \infty[\times T^*M\times  C^{\infty}(M)$. So the projection $\Pi(\mN(\Upsilon))$ is an $F_{\sigma}$.

 As already noticed in earlier works, it is convenient to decompose the set $\mN(\Upsilon)$ in two parts.
 
 The first part is $\mN^d$, the set of elements of $\mN$ which are degenerate, meaning that $1$ is an eigenvalue of the linearized restricted return map. It is nothing that $\mN(\Upsilon_1)$, where $\Upsilon_1$ is the set of symplectic matrices having the eigenvalue $1$, and it is contained in $\mN(\Upsilon)$ since we assumed that
 $\Upsilon_1\subset \Upsilon$ (otherwise we could just replace  $\Upsilon$ by $\Upsilon \cup \Upsilon_1$).
 We denote by $\mN^r$ the complement of $\mN^d$, \textit{i.e.} the set of regular triples. So for $(s,x,u)\in \mN^r$, the $H+u$ orbit of $x$ has minimal period $s$ and is non-degenerate as an $s$-periodic orbit. Note that it might be degenerate as a $2s$-periodic orbit. 
 
 The second part of $\mN(\Upsilon)$ that we shall consider is
 $$
 \mN^r(\Upsilon):= \mN^r\cap \mN(\Upsilon).
 $$
It corresponds to the periodic orbits in $\mN$ which are regular and have a return map in $\Upsilon$.
 Obviously, $\mN(\Upsilon)$ is the disjoint union of $\mN^d$ and of $\mN^r(\Upsilon)$.

 We will  prove in  Proposition \ref{prop-fnd} and Proposition
 \ref{prop-pt} below that $\Pi(\mN^r(\Upsilon))$ and $\Pi(\mN^d)$ are $F_{\sigma}$ with empty interior, which implies that 
 $\Pi(\mN(\Upsilon))$ is an $F_{\sigma}$ with empty interior and proves Theorem \ref{thm-g1}.
 
 \begin{proposition}\label{prop-fnd}
 	The set $\Pi(\mN^r(\Upsilon))\subset C^{\infty}(M)$ is an $F_{\sigma}$ with empty interior.
 \end{proposition} 
 
 \begin{proof}
 	We start with the main principles of the proof.
 	
 	The triples $(s,x,u)\subset \mN$ correspond to periodic orbits of $H+u$, and we consider the map
 	$
 	L:\mN \rightarrow  Sp(2d)
 	$
 	which to each such triple associates the restricted linearized return map of the corresponding periodic orbit. This map is actually not well defined globally, because it involves local choices of sections and coordinates, but we will temporarily continue the discussion as if it was well-defined.
 	The map $L$ is continuous and, in view of Theorem \ref{thm-pert}, weakly open (the image of an open set contains an open set).
 	Then $\mN(\Upsilon)=L^{-1}(\Upsilon)$ is an $F_{\sigma}$ with empty interior in $\mN$.
 	Since $\mN^r$ is open in $\mN$, we deduce similarly that  $\mN^r(\Upsilon)$ is an $F_{\sigma}$ with empty interior in $\mN^r$.
 	
 	The continuous dependence of non-degenerate periodic orbits imply that the restriction of $\Pi$ to $\mN^r$ is a local homeomorphism onto an open subset of  $C^{\infty}(M)$.
 	This is the place when we take advantage of studying $\mN^r(\Upsilon)$ rather than the whole $\mN(\Upsilon)$.
 	This property allows to conclude easily, from the fact that $\mN^r(\Upsilon)$ is an $F_{\sigma}$ with empty interior, 
 	that its projection is also an $F_{\sigma}$ with empty interior.

 	With the above steps as guidelines, we now give more details.
 	Since $\mN^r \subset ]0, \infty[\times T^*M\times  C^{\infty}(M)$ is a separable metric space, it is enough to prove the property locally. More precisely it is enough to prove that
 	each point $(s_0,x_0,u_0)\in \mN^r$  has an open  neighborhood $\mN^r_{loc}$ such that $\Pi(\mN^r_{loc}\cap \mN^r(\Upsilon))$ is an $F_{\sigma}$ with 
 	empty interior. 
 	Then, we can cover $\mN^r$ by countably many open subsets $\mN^r_{loc}$ having this property, hence $\Pi(\mN^r)$ is a countable union of $F_{\sigma}$ with empty interior, so by the Baire property, it is an $F_{\sigma}$ with empty interior.
 	
 	We now fix $(s_0,x_0,u_0)\in \mN^r$ and prove that $\Pi(\mN^r_{loc}\cap \mN^r(\Upsilon))$ is an $F_{\sigma}$ with 
 	empty interior provided $\mN^r_{loc}$ is a sufficiently small open neighborhood of $(s_0,x_0,u_0)$.
 	We can assume without loss of generality that $u_0=0$, and work in the local coordinates near $x_0$ given by Theorem \ref{thm-nf} (it is actually not necessary here to have such a specific normal form). In these coordinates, $x_0=0$.
	If $\mN^r_{loc}$ is sufficiently small, then for each $(\underline s,\underline x,\underline u)\in \mN^r_{loc}$, the point $\underline x$ belongs to the domain of local coordinates and we denote by $(\underline q_1,\underline p_1, \underline {\hat x})$ its coordinates. Still assuming that $\mN^r_{loc}$ is small enough,  we can take $\hat x$ as local symplectic coordinates of the  section 
	$\{q_1=\underline q_1, H+\underline u =0\}\subset T^*M$ near the point $\underline x$.
	We define  the matrix $L(\underline s, \underline x, \underline u)\in Sp(2d)$ as the restricted linearized return map
 	of the $H+\underline u$ orbit of $\underline x$ associated to the section
 	$\{q_1=\underline q_1, H+\underline u =0\}$, expressed in coordinates $\hat x$. The map 
 	$$
 	L:\mN^r_{loc}\longrightarrow Sp(2d)
 	$$
 	is continuous an weakly open, hence 
 	$$
 	\mN^r_{loc}(\Upsilon):= \mN^r_{loc}\cap \mN(\Upsilon)= L^{-1}(\Upsilon)
 	$$
 	is an $F_{\sigma}$ with empty interior in $\mN^r_{loc}$. By the continuous dependence of non-degenerate periodic orbits  on parameters, we can assume by further reducing $\mN^r_{loc}$ that the restriction of the projection $\Pi$ to this open subset is a homeomorphism onto an open subset $C^{\infty}_{loc}$ of $C^{\infty}(M)$. Then $\Pi(\mN^r_{loc}(\Upsilon))$ is an $F_{\sigma}$ with empty interior in  $C^{\infty}_{loc}$, hence in $C^{\infty}(M)$.
 	\end{proof}

 \begin{proposition}\label{prop-pt}
 	The set $\Pi(\mN^d)\subset C^{\infty}(M)$ is an $F_{\sigma}$ with empty interior.
 \end{proposition}
 
 \begin{proof}
 Since $]0, \infty[\times T^*M\times  C^{\infty}(M)$ is a separable metric space, it is enough to prove the property locally. More precisely it is enough to prove that
 each point $(s_0,x_0,u_0)\in \mN^d$  has an open  neighborhood $\mN^d_{loc}$ such that $\Pi(\mN^d_{loc})$ is nowhere dense. Note that $\mN^d_{loc}$ is locally closed, hence its projection is an $F_{\sigma}$.
 
 We can assume without loss of generality that $u_0=0$, and work in the local coordinates near $x_0$ given by Theorem \ref{thm-nf}. In these coordinates, $x_0=0$.
 We consider the transverse section $\{q_1=0\}$, and its intersection with the energy surface $\{H=0\}$. The tangent space at $x=0$ to this energy surface is $\{p_1=0\}$, and this implies that $\hat x=(\hat q, \hat p)$ are symplectic local  coordinates of
 the restricted section 
 $$\Lambda (u):= \{(q,p)\in \Rm^{2(d+1)} : \; q_1=0, (H+u)(q,p)=0\}$$
  provided $u$ belongs to a sufficiently small open neighborhood $C_{loc}^{\infty}$ of $0$. We denote by $x(\hat x,u)$ the point of $\Lambda(u)$ which has coordinate $\hat x$,
 by 
 $$
 \tau(\hat x,u): \Rm^{2d}_{loc}\times C_{loc}^{\infty}\longrightarrow \Rm
 $$
 the first return time of the point $x(\hat x,u)$ to the section $\Lambda(u)$. It is well defined and smooth provided $\Rm^{2d}_{loc}$ is a sufficiently small open neighborhood of $0$ in $\Rm^{2d}$.
 We also define the return map
 $$\psi (\hat x,u ):\Rm^{2d}_{loc}\times C_{loc}^{\infty}\longrightarrow \Rm^{2d}
 $$
  expressed in coordinates, meaning that $\psi(\hat x, u)$ is the $\hat x$ coordinates of the point 
 $\varphi(\tau (\hat x,u), x(\hat x,u),u)$.
 Let
  $$ \mY\subset \Rm^{2d}_{loc}\times C_{loc}^{\infty}
  $$
 be the set of solutions of the equation $\psi(\hat x, u)=\hat x$, and let $\mY^d\subset\mY$ be the set of degenerate solutions of this equation, meaning those such that  $\partial_{\hat x}\psi(\hat x, u)$ has the eigenvalue $1$.
 Given $(\hat x, u)\in \Rm^{2d}_{loc}\times C_{loc}^{\infty}$, we have    $(\hat x,u)\subset \mY$ if and only if 
 $(\tau(\hat x,u),x(\hat x, u),u)\subset \mP$, and this is also equivalent to $(\tau(\hat x,u),x(\hat x, u),u)\subset \mN$ (because $\mN$ is open in $\mP$). We have    $(\hat x,u)\subset \mY^d$ if and only if $(\tau(\hat x,u),x(\hat x, u),u)\subset \mN^d$.
 The set 
 $$
 \mN_{loc} := \{(\tau(\hat x, u), \varphi(t, x(\hat x, u),u), u): t\in \Rm, (\hat x,u)\in \mY\}
 $$
 is an open neighborhood of $(s_0,0,0)$ in $\mN$, which has the property that 
 $\Pi(\mN_{loc})=\Pi(\mY)$ 
 and, setting $ \mN^d_{loc}:=\mN^d\cap \mN_{loc}$,
 $$
 \Pi(\mN^d_{loc})=\Pi(\mY^d)
 $$
 where we still use $\Pi$ for the projection on the second factor in the product   $\Rm^{2d}_{loc}\times C_{loc}^{\infty}$.
 
 So we are reduced to proving that $\Pi(\mY^d)$ is nowhere dense.
 The general idea is that $\mY$ is a submanifold in $\Rm^{2d}_{loc}\times C_{loc}^{\infty}$ and that 
 $\mY^d$ is the set of regular points of $\Pi$ on this manifold, which allows to conclude using Sard's Theorem.
 In order to avoid manipulating differential calculus on the Fréchet space  $C^{\infty}(M)$, we restrict ourselves to a finite
 dimensional subspace.
 
 \begin{lemma}\label{lem-c2}
 	For any neighborhood $U$ of $0$ in $\Rm^{d+1}$, 
 	there exists a finite dimensional subspace $E\subset C^{\infty}(U)$ formed by potentials compactly supported inside $U$ and null on the orbit $\Rm\times \{0\}$, such that 
 	$\partial_u \psi(0,0)$ sends $E$ onto $\Rm^{2d}$. 
 	\end{lemma}
 
 We finish the proof of Proposition \ref{prop-pt} assuming the Lemma.
 By continuity, the equality $\partial_u \psi(\hat x,u)\cdot E=\Rm^{2d}$ holds for all
  $(\hat x, u)\in \Rm^{2d}_{loc}\times C^{\infty}_{loc}$ (we decrease the neighborhoods if necessary).
 Then for each $v\in C^{\infty}_{loc}$, the map 
 $$\Psi: \Rm^{2d}_{loc}\times E_{loc} \ni (\hat x,u) \mapsto \Psi(\hat x,u):=\psi(\hat x,v+u)-\hat x \in \Rm^{2d}$$
  is a submersion, provided $E_{loc}$ is a sufficiently small neighborhood of $0$ in $E$.
    As a consequence, the set  $N:=\Psi^{-1}(0)$ is a submanifold, and the points $(\hat x,u)\in N$ such that $\partial_{\hat x} \Psi(\hat x,u)$
  is not invertible are the singular  points of the projection $\Pi_{|N}$ (this follows from elementary considerations in finite dimensional linear algebra).
  If $v+u$ belongs to $\Pi(\mY^d)$, then there exists a point $\hat x\in \Rm^{2d}_{loc}$ such that 
  $(\hat x, v+u)\in \mY^d$, which implies that $(\hat x,u)\in N$ is a critical point of $\Pi_{|N}$.
  So $u$ has to be a critical value of $\Pi_{|N}$. By Sard's Theorem, there exist regular values of $\Pi_{|N}$
  arbitrarily close to $0$, and this implies that $v$ does not belong to the interior of $\Pi(\mY^d)$.
  Since this holds for all $v\in C^{\infty}_{loc}$, we conclude that $\Pi(\mY^d)$ has empty interior.
  This ends the proof of Proposition \ref{prop-pt}.
  \end{proof}

 \textit{Proof of Lemma \ref{lem-c2}.}
 As for  the proof of Lemma 2 in \cite{A82}, the key element is that the matrix $D=\partial^2_{\hat p \hat p}H(0)$ is invertible, which is precisely the expression in coordinates of fiberwise isoenergetic non-degeneracy.
 As in the proof of Lemma \ref{lem-el}, we consider the vectorfield $Z(x,u):=\mathbb{J} \partial_x (H+u )(x)/\partial_{p_1}H(x)$ and denote by 
 $\varphi_Z(t,x,u)$ its flow.  
 The curve $\omega(t):=\partial_u \varphi_Z(t,0,0)\cdot u$  solves the differential equation 
 $$
 \omega'(t)=\partial_x Z (te_1,0,0)\cdot \omega(t)+\partial_uZ(te_1,0,0)\cdot u.
 $$
 This equation can be obtained by differentiating the differential equation
 $$
 \partial_t \varphi_Z(t,x,u)= Z(\varphi_Z(t,x,u),u)
 $$
 with respect to $u$ along the orbit $\varphi_Z(t,0,0)=(te_1,0)$.
 Note that $\omega(t)$ depends (linearly) on $u$ but we do not  explicit this dependence to simplify notations.

The matrix $\partial_x Z (te_11,0,0)$ was computed in the proof of Lemma \ref{lem-el}, it has the block form
$$
\partial_x Z(te_1,0,0)=
\begin{bmatrix}
	0&0\\
	* & Y(t)
\end{bmatrix}
$$
where the blocs are associated to the decomposition $x=(x_1, \hat x)$.
As to the vector $\partial_uZ(te_1,0,0)\cdot u$, recall that
 $Z(x,u)=f(x)X_H(x,u)$,
 where $f(x)=1/\partial_{p_1}H(x,u)$. The function $f$ does not depend on $u$ and satisfies $f(te_1,0)=1$,
 hence  $\partial_u Z(te_1,0,0)\cdot u=\partial_uX_H(te_1,0,0)\cdot u$. If the potential $u$ vanishes on the orbit,
  we obtain, in coordinates 
 $(x_1, \hat q, \hat p)$:
$$
 \partial_uZ(te_1,0,0)\cdot u= (0,0, -\partial_{\hat q} u(te_1)).
 $$
 So the solution emanating from the initial condition 
 $\omega(0)=0$ is  $\omega(t)=(0, \hat \omega(t))$, 
 where $\hat \omega(t)$ is the solution emanating from $\hat \omega(0)=0$ of the equation
 \begin{equation}\label{eq-c2}
 \hat \omega'(t)=Y(t)\hat \omega(t)+\hat w_u(t),
 \end{equation}
 with
 $Y(t)=\begin{bmatrix} 0&D\\-K(t)&0\end{bmatrix}
 $ 
 and 
 $\hat w_u(t)=\begin{bmatrix} 0\\-\partial_{\hat q}u(te_1)\end{bmatrix}$.
 We denote by $\psi(t,\hat x,u)$ the restricted transition map between the sections $\{q_1=0\}$ and $\{q_1=t\}$, so that $\psi(t, \hat x, u)$ is  the $\hat x$-coordinate of $\varphi_Z(t,x(\hat x,u),u)$ where $x(\hat x,u)$ is the point of $\Lambda(u)$ with coordinate $\hat x$. As a consequence,
 $$\partial_u\psi (t,0,0)\cdot u=\hat \omega(t)
 $$ 
is determined by the differential equation (\ref{eq-c2}), and by the initial condition $\hat \omega(0)=0$.
 
We are thus once more reduced to a linear controlled differential equation   :
$$
\hat \omega'(t)=Y(t)\hat \omega(t)+b(t)
$$
where $b(t)$ is a control with values in $\{0\}\times \Rm^d$.
Denoting by $L(t)$ the curve of matrices such that $L(0)=I$ and $L'(t)=Y(t)L(t)$ and by $y(t)=L^{-1}(t)\hat \omega(t)$, this controlled equation is reduced to :
$$
y'(t)=L^{-1}(t) b(t).
$$
We fix  $\sigma >0$ and consider a control of the form 
$b(t)=\delta(t)\alpha$, with $\alpha$ fixed in $\{0\}\times \Rm^d$ and where $\delta(t)$ is a smooth approximation of the dirac at $0$ supported in $]0,\sigma[$.
We see that the corresponding $y(s)$ is an approximation of $\alpha$.
Now for a control $b(t)=\delta'(t)\beta$, we have 
$$
y'(t)=(L^{-1}\delta)'\beta -(L^{-1})'\delta \beta
=(L^{-1}\delta)'\beta +L^{-1}Y\delta \beta
$$
hence $y(\sigma)\approx Y(0)\beta$. Since $D$ is invertible, the vectors $e_i+Y(0)e_j, 1\leq i,j \leq d$ span $\Rm^{2d}$
(where $e_i$ is the standard base of $\Rm^d$).
So if  $\delta$ is a sufficiently good approximation of the Dirac at $0$, then
the values $y(\sigma)$ of the solutions of the controlled differential equation corresponding to the controls
$\delta(t)e_i +\delta'(t)e_j, 1\leq i,j\leq d$, span $\Rm^{2d}$.
Now for each $i,j$, we consider a potential $u_{ij}$ compactly supported in $U$, null on the orbit, and such that 
$\hat w_{u_{ij}}= \delta(t)e_i +\delta'(t)e_j, 1\leq i,j\leq d$
and define $E$ as the space generated by these potentials. We can moreover assume that the supports are   disjoint from the remaining part $Q([\sigma,s_0])$ of the projected orbit.
The linear map
$\partial_u\psi(\sigma,0,0): u \mapsto \hat \omega(\sigma)$
is then onto.

To end the proof, we denote by $G(\sigma,\hat x)$ the restricted section map from $\{q_1=\sigma \}$ to $\{q_1=0\}$ along the  periodic orbit. This map does not depend on $u\in E$ 
since the supports have been taken disjoint from $Q([\sigma,s_0])$. The restricted Poincaré return map satisfies
$$\psi(\hat x,u)=G(\sigma,\psi(\sigma, \hat x, u))
$$
and $\partial_u\psi(0,0)=\partial_{\hat x} G (\sigma ,0)\circ \partial_u \psi(\sigma,0,0)$
is onto.
\qed

\section{Orbits inside a submanifold.}\label{sec-o}

We prove Theorem \ref{thm-g2}.
It is enough to prove the statement under the assumption that $\Sigma\subset T^*M$ is a submanifold of positive codimension transverse to the vertical.
Let  $\mR(\epsilon)$ be the set of couples $(x,u)\in T^*M \times C^{\infty}(M)$ such that 
$\partial_pH(x)\neq 0$ and such that $\varphi([-\epsilon,\epsilon]\times \{x\}\times \{u\})\subset \Sigma$. We want to prove that 
$\Pi(\cup_{\epsilon >0}\mR(\epsilon))=\cup _{\epsilon >0}\Pi(\mR(\epsilon))$ is contained in an $F_{\sigma}$ with empty interior.
Since $\cup _{\epsilon>0}\Pi(\mR(\epsilon))$ is  equal to the countable union $ \cup _{n\in \Nm^*} \Pi (\mR(1/n))$, 
it is enough to prove that each of the sets $\Pi(\mR(\epsilon)), \epsilon >0$,
is contained in an $F_{\sigma}$ with empty interior. Moreover, it is enough to do so locally.
More precisely, it is enough to prove, 
 given $(x_0,u_0)\in \mR(\epsilon)$, the existence of   open neighborhoods $T^*M_{loc}$ of $x_0$ in $T^*M$ and $C^{\infty}_{loc}$ of $u_0$ in $C^{\infty}(M)$ such that
 $\Pi\big((T^*M_{loc}\times C^{\infty}_{loc})\cap \mR(\epsilon)\big)$ is 
 contained in an $F_{\sigma}$ with empty interior.

We consider many small times $0<\sigma_0<\sigma_1<\sigma_2<\cdots <\sigma_k <\sigma_{k+1}<\epsilon$, with $k>2d+2$  and define the map
$$
\Phi: T^*M\times C^{\infty}(M) \ni (x,u)\mapsto (\varphi(\sigma_1,x,u), \ldots , \varphi(\sigma_k,x,u))\in (T^*M)^k,
$$
so that $\mR(\epsilon)\subset \Phi^{-1}(\Sigma^k)$.
\begin{lemma}\label{lem-vert}
	For each $(x_0,u_0)$ such that $\partial_pH(x_0)\neq 0$, the times $\sigma_i$ can be chosen such that 
there exists a finite dimensional subspace $E\subset C^{\infty}(M) $ for which  the map
$$
T^*M\times E\ni(x,u)\mapsto \Phi(x,u_0+u)
$$  is transverse to $\Sigma^k$ at the point $(x_0,0)$.
	\end{lemma}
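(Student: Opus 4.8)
\textit{Proof sketch.} There is nothing to prove unless $\Phi(x_0,u_0)\in\Sigma^k$, so assume the points $y_i:=\varphi(\sigma_i,x_0,u_0)$ all lie in $\Sigma$. Since $\partial_pH(x_0)\neq 0$ and $u_0$ is a potential, the projected orbit $s\mapsto\pi\circ\varphi(s,x_0,u_0)$ is an embedding on a small interval $[0,\epsilon']$ with $\epsilon'<\epsilon$, and the plan is to choose all the times $0<\sigma_1<\cdots<\sigma_k$ inside $]0,\epsilon'[$ (and some auxiliary times interlacing them), so that the base points $q_i:=\pi(y_i)$ are pairwise distinct and the orbit arc can be cut into disjoint pieces. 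Writing $N_i:=T_{y_i}T^*M/T_{y_i}\Sigma$, the map in the statement is transverse to $\Sigma^k$ at $(x_0,0)$ as soon as its differential there, composed with the projection $T(T^*M)^k\to\prod_i N_i$, is onto; and for this it is enough that the partial differential in the $u$-direction alone be onto $\prod_i N_i$. By Duhamel's formula,
\[
\partial_u\varphi(\sigma_i,x_0,u_0)\cdot u=\int_0^{\sigma_i}d\varphi\big(\sigma_i-s,\varphi(s,x_0,u_0),u_0\big)\cdot X_u\big(\varphi(s,x_0,u_0)\big)\,ds,
\]
where $X_u=(0,-\partial_qu)$ is the vertical Hamiltonian vector field of $u$; in particular this expression vanishes whenever $u$ is supported off the arc $\pi\circ\varphi([0,\sigma_i],x_0,u_0)$.

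I would build $E$ out of potentials $u_{i,m}$, one family for each $i$, with $u_{i,m}$ supported in a thin tube around the orbit arc running from just before $\sigma_i$ to just after $\sigma_i$, these tubes pairwise disjoint and disjoint from $\pi\circ\varphi([0,\sigma_{i-1}],x_0,u_0)$ (possible by the embedding property). Then $\partial_u\varphi(\sigma_j,x_0,u_0)\cdot u_{i,m}=0$ for $j<i$, so the matrix of the map $E\to\prod_i N_i$ is block lower triangular in $i$, and it suffices that each diagonal block $u_{i,m}\mapsto$ [class of $\partial_u\varphi(\sigma_i,x_0,u_0)\cdot u_{i,m}$] be onto $N_i$. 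For the $i$-th block I would run a Dirac-approximation argument along the orbit, in the spirit of the proof of Lemma \ref{lem-c2}: writing $\partial_u\varphi(\sigma_i,x_0,u_0)\cdot u_{i,m}=\int_0^{\sigma_i}d\varphi(\sigma_i-s,\cdot)\cdot(0,-\partial_qu_{i,m}(\pi\circ\varphi(s,x_0,u_0)))\,ds$ and choosing $u_{i,m}$ so that $s\mapsto-\partial_qu_{i,m}(\pi\circ\varphi(s,x_0,u_0))$ is concentrated near $s=\sigma_i$ with prescribed "mass" $\xi_{i,m}\in(\Rm^{d+1})^*$, one gets $\partial_u\varphi(\sigma_i,x_0,u_0)\cdot u_{i,m}$ arbitrarily close to the vertical vector $(0,\xi_{i,m})$. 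Since $\Sigma$ is transverse to the vertical (Hypothesis \ref{h-1}), the vertical subspace $V_{y_i}$ already surjects onto $N_i$; so, picking covectors $\xi_{i,1},\dots,\xi_{i,m_i}$ ($m_i=\dim N_i$) with $(0,\xi_{i,1}),\dots,(0,\xi_{i,m_i})$ projecting to a basis of $N_i$ and making the approximation good enough, the diagonal block is onto by openness of surjectivity. A block lower-triangular linear map with surjective diagonal blocks is surjective, which yields the required transversality.

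The delicate point is the realization of the \emph{full} vertical space in each diagonal block. A potential supported in a tiny ball around one orbit point produces, to leading order, only a vertical kick annihilating the velocity $\dot{(\pi\circ\varphi)}(\sigma_i)$ — i.e. the $p_1$-direction (the momentum conjugate to the direction of motion) is lost, because the corresponding component of $\int\partial_qu_{i,m}(\pi\circ\varphi(s,x_0,u_0))\,ds$ is a boundary term that vanishes. To recover it one must instead take $u_{i,m}$ shaped like a smooth step along the orbit arc — so that its derivative along the arc is itself a (signed) approximate Dirac concentrated just before $\sigma_i$ — which forces the use of an elongated tube rather than a small ball; this is the main technical obstacle. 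Conceptually it is exactly here that Hypothesis \ref{h-1} enters: only vertical variations are cheaply available from potentials, so one needs precisely that $\Sigma$ be transverse to the vertical for these to span each $N_i$. The remaining ingredients — the Duhamel formula, the bound on $d\varphi(\sigma_i-s,\cdot)-\mathrm{Id}$ on the shrinking support, the explicit construction of the step-shaped potentials, and the assembly of the finite-dimensional $E$ — are routine.
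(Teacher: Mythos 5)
Your proof is correct and follows essentially the same strategy as the paper: Dirac-approximation potentials concentrated just before each $\sigma_i$ produce near-vertical kicks, yielding a block-triangular $u$-differential of $\Phi$ whose diagonal blocks span the vertical fiber, and Hypothesis~\ref{h-1} then makes each diagonal block surject onto the normal space $N_i$. The ``step-shaped potential'' subtlety you flag for the direction conjugate to the velocity is real, and the paper handles it implicitly by prescribing $du_{i,j}(te_1)=e_j\delta_i(t)$ for \emph{all} $j$ including $j=1$ (so $u_{i,1}$ automatically integrates to a step along the orbit); your explicit discussion of that point is a useful clarification but not a different route.
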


We fix $(x_0, u_0)\in \mR(\epsilon)$.
Assuming the Lemma,  there exist open neighborhoods $T^*M_{loc}$, $C^{\infty}_{loc}$ and $E_{loc}$  of $x_0$, $u_0$ and $0$ in $T^*M$, $C^\infty $ and $E$ such that the  map
$$
\tilde \Phi : T^*M_ {loc}\times E_{loc}\ni(x,u)\mapsto \Phi(x,u_1+u)
$$
is transverse to $\Sigma^k$ for each fixed $u_1\in C^{\infty}_{loc}$. 
We deduce that  $\tilde\Phi^{-1}(\Sigma^k)$ is a submanifold in $T^*M_{loc}\times E_{loc}$, of codimension equal to the codimension of $\Sigma^k$ in 
$(T^*M)^k$, and this codimension is at least $k$. Since $k>2d+2=\dim (T^*M)$,
the dimension of $\tilde \Phi^{-1}(\Sigma^k)$  is smaller than the dimension of $E$, hence the projection of $\tilde \Phi^{-1}(\Sigma^k)$ on $E$ is nowhere dense.
Setting 
$$\Phi_{loc}^{-1}(\Sigma^k):= \Phi^{-1}(\Sigma^k)\cap (T^*M_{loc}\times  C^{\infty}_{loc}),
$$
 we deduce that $u_1$ is not in the interior of $\Pi(\Phi_{loc}^{-1}(\Sigma^k))$, and since this holds for every $u_1\in C^{\infty}_{loc}$, we deduce that $\Pi(\Phi_{loc}^{-1}(\Sigma^k))$ has empty interior.
Moreover, $\Phi_{loc}^{-1}(\Sigma^k)$ is locally closed in $T^*M\times C^{\infty}(M)$,
hence it is an $F_{\sigma}$, so that its projection is also an $F_{\sigma}$.
This $F_{\sigma}$ with empty interior contains the projection of 
$(T^*M_{loc}\times C^{\infty}_{loc})\cap \mR(\epsilon)$, which concludes the proof of Theorem \ref{thm-g2}.
 \qed

\textit{Proof of Lemma \ref{lem-vert}.}
There is no loss of generality to assume that $u_0=0$.
Since $H$ is not assumed fiberwise isoenergetically non-degenerate at $x_0$, Theorem \ref{thm-nf} can't be applied at $x_0$. However there exist local  coordinates such that $\varphi(t,x_0,0)=(te_1,0)$
for small $|t|<\delta$ for some $\delta>0$. 
A look at the  proof of Theorem \ref{thm-nf} shows that such coordinates exist under the only assumption that $\partial_pH(x_0)\neq 0$.
We assume that $\sigma_i\in ]0,\delta[$, in particular the projected orbit is one to one
on $[0, \sigma_{k+1}]$.

Recall that, for a given potential $u$, the curve $t\mapsto d(t):=\partial_u\varphi(t,0,0)\cdot u$ is determined by the non-homogeneous linear equation 
$$
d'(t)=\xi (t) d(t)+(0,- du(te_1))
$$
and by the initial condition $d(0)=0$, where $\xi(t)=\mathbb{J}\partial^2_{xx}H(te_1,0)$ is the linearized equation.
Unlike in earlier sections of the paper, we have no information on $\xi(t)$.
We denote by $\Xi_s^t$ the family of solutions of the equation 
$$
\partial_t \Xi_s^t =\xi(t)\Xi_s^t
$$
with $\Xi_s^s=I$. Observe that $\Xi_s^t=\partial_x\varphi(t-s,se_1,0)$.

Let $\delta_i(t)$ be smooth approximations of the Dirac function at time $\sigma_i$, supported in $]\sigma_{i-1}, \sigma_i[$.
For each $i$ and $j$,
there exists a smooth potential 
$u_{i,j}$ such that
$$
du_{i,j}(te_1)=-e_j \delta_i(t),
$$
where $(e_j)$ is the standard base of $\Rm^{d+1}$.
The differential
$\partial_u \varphi(\sigma_i,0,0)\cdot u_{i,j}$ is approximately the vertical vector $l_j:=(0,e_j)\in \Rm^{d+1}\times \Rm^{d+1}$.
Then for $i'\geq i$, 
$$
\partial_u \varphi(\sigma_{i'},0,0)\cdot u_{i,j}\approx \Xi_{\sigma_i}^{\sigma_{i'}} l_j.
$$
As a consequence, we have 
$$
\partial_u\Phi(0,0)\cdot u_{i,j}\approx \eta_{i,j}:=(0, \ldots, 0, l_j,\Xi_{\sigma_i}^{\sigma_{i+1}}l_j,\ldots , \Xi_{\sigma_i}^{\sigma_k}l_j)\in (\Rm^{2d+2})^k.
$$
The vectors $\eta_{i,j}$ span the product of verticals $(\{0\}\times \Rm^{d+1})^k$ which is a subspace  transverse to $\Sigma^k$.
By taking for $\delta_i$ sufficiently good approximations of Dirac functions, we can make the vectors  $\partial_u\Phi(0,0)\cdot u_{i,j}$ 
as close as we want to $\eta_{i,j}$, and then  they also span a vector subspace transverse to $\Sigma^k$. 
As a consequence the $k(d+1)$-dimensional vector space $E$ generated by the potentials $u_{i,j}$ has the property that 
$
\partial_u\Phi(0,0)
$
sends $E$ to a subspace transverse to $\Sigma^k$.
\qed

%

\end{document}